\theoremstyle{plain}
\newtheorem{theorem}{Theorem}[section]
\newtheorem{proposition}[theorem]{Proposition}
\newtheorem{lemma}[theorem]{Lemma}
\newtheorem{corollary}[theorem]{Corollary}
\theoremstyle{definition}
\newtheorem{definition}[theorem]{Definition}
\newtheorem{example}[theorem]{Example}
\theoremstyle{remark}
\newtheorem{remark}[theorem]{Remark}
\def\C{\mathbb C}
\def\P{\mathbb P}
\def\O{\mathcal O}
\def\F{\mathcal F}
\def\Sym{{\rm Sym}}
\def\Sec{{\rm Sec}}
\def\Conf{{\rm Conf}}
\begin{document}

\title[Fourier-Deligne transform and representations of $S_n$]{Fourier-Deligne transform and representations
of the symmetric group}

\author{Galyna Dobrovolska}

\begin{abstract}
We calculate the Fourier-Deligne transform of the IC extension to ${\C}^{n+1}$ of the local system ${\mathcal L}_{\Lambda}$
on the cone over $\Conf_n({\P}^1)$ associated to a representation $\Lambda$ of $S_n$, where the length $n-k$ of the first row of 
the Young diagram of $\Lambda$ is at least $\frac{|\Lambda|-1}{2}$. The answer is the IC extension to the dual vector space
 ${\C}^{n+1}$ 
of the local system ${\mathcal R}_{\lambda}$ on the cone over the $k$-th secant variety of the rational normal curve in ${\P}^n$, 
where ${\mathcal R}_{\lambda}$ corresponds to the representation $\lambda$ of $S_k$, the Young diagram of which is obtained from
the Young diagram of $\Lambda$ by deleting its first row. We also prove an analogous statement for $S_n$-local systems on fibers of the
Abel-Jacobi map. We use our result on the Fourier-Deligne transform to rederive a part of a result of Michel Brion on Kronecker coefficients.
\end{abstract}

\maketitle

\section{Introduction}

Let $W \cong \C^2$ be a two-dimensional complex vector space. For any positive integer $r$ we can identify the projective 
space ${\P}^r=\P({\Sym}^r(W))$ with degree $r$ polynomials in two variables up
to scaling, and consider the discriminant hypersurface $D_r$ in ${\P}^r$ which is formed by polynomials with repeated roots.
The complement ${\P}^r - D_r$ is identified with the configuration space $\Conf_r(\P^1)$ of $r$ unordered distinct
points on the projective line ${\P}^1$. Hence ${\pi}_1({\P}^r - D_r)$ is the spherical braid group $B_r({\P}^1)$. It is known
that $B_r({\P}^1)=B_r/ (b_1 b_2 ... b_{r-1} b_{r-1} ... b_2 b_1) $, where $b_1, b_2, ..., b_{r-1}$ are the standard
generators of the braid group $B_r$, and the usual map $B_r \to S_r$ factors through $B_r({\P}^1)$.
Hence a representation $\rho$ of the symmetric group $S_r$ gives a representation of $B_r({\P}^1)$
 and thus a local system $\mathcal{L}_{\rho}$ on ${\P}^r - D$. We denote by ${\bf L}_{\rho}$ the perverse sheaf on the complex 
vector space ${\Sym}^r(W)$
which is the intermediate extension to ${\Sym}^r(W)$ of the lift of ${\mathcal L}_{\rho}$ to the complement of the cone over $D_r$ in ${\Sym}^r(W)$. 
The goal of this paper is to find the Fourier-Deligne transform (see Definition \ref{fourierdefn} below or Chapter III of \cite{KS}) of such objects ${\bf L}_{\rho}$.

The Fourier-Deligne transform for similar local systems on the set of semisimple regular matrices 
is important in Springer theory, see \cite{B}, Section XI,  or \cite{KW}, Chapter VI, and for similar local systems on 
curves of higher genus it is important in the geometric Langlands program in the paper \cite{D} of Drinfeld (1983).

The main result of this paper is Theorem \ref{main} in which we calculate the Fourier-Deligne transform of ${\bf L}_{\rho}$ where 
the first row of $\rho$ has length at least $\frac{|\rho|-1}{2}$ (in this case we say that $\rho$ has a long first row); 
Theorem \ref{main} is generalized to the case of an arbitrary smooth curve embedded into projective space in Theorem 
\ref{abeljacobi}. Theorem \ref{main} is also used to derive Proposition \ref{brion} in which we find
a  relation with Michel Brion's result in \cite{Brion}, Cor. 2, Section 3.4.

\subsection*{Main theorem for $\P^1$}
 For a Young diagram denoted by a
capital Greek letter, we denote the Young diagram obtained by deleting its first row by the corresponding
lowercase Greek letter. (Note that we use the same notation for a representation of $S_r$ and for its Young diagram).
Let $|\Lambda|=n$ and let $k=|\lambda|$. Consider the rational normal curve
$X$  in the projective space $\P(\Sym^n (W^*))$ which is the dual variety of the hypersurface $D_n \subset \P(\Sym^n (W))$. 
The $k$-th secant variety $\Sec^k=\Sec^k (X)$ of $X$
 is the closure of the union of "$k$-secant" $(k-1)$-planes $\P^{k-1}$ that pass through $k$ distinct points of $X$.
When $k\leq \frac{n+1}{2}$ (i.e. $\Lambda$ has a long first row) 
the variety $\Sec^k - \Sec^{k-1}$ is smooth and has the structure of a fiber bundle 
over the unordered configuration space ${\P}^k - D_k \cong {\rm Conf}_k(\P^1)$ (see Lemma \ref{secantlemma});
the cone $U_k$ (with vertex extracted) over $\Sec^k - \Sec^{k-1}$ in $\Sym^n(W^*)$ shares the same properties. We define a local system ${\mathcal R}_{\lambda}$ on $U_k$  by pulling back the local system ${\mathcal L}_{\lambda}$ from
${\P}^k-D_k$ via the bundle map. 
We denote the perverse sheaf which is the intermediate extension of ${\mathcal R}_{\lambda}$
 to $\Sym^n (W^*)$ by ${\bf R}_{\lambda}$.

\begin{theorem} \label{main} 
If $\Lambda$ has a long first row,
the Fourier-Deligne transform of the perverse sheaf ${\bf L}_{\Lambda}$ 
on the vector space $\Sym^n(W)$ 
is the perverse sheaf ${\bf R}_{\lambda}$ on $\Sym^n(W^*)$. 
\end{theorem}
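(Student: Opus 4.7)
The plan is to exploit two standard facts: the Fourier-Deligne transform of a simple perverse sheaf on a vector space is a simple perverse sheaf on the dual space, and a simple perverse sheaf on the closure of a smooth locally closed subvariety is determined by its support together with its restriction to the smooth open part of that support. Accordingly I aim to verify two things: that $\operatorname{FT}(\mathbf{L}_\Lambda)$ has support $\overline{U_k} \subset \Sym^n(W^*)$, and that its restriction to $U_k$ is (up to a cohomological shift) the local system $\mathcal{R}_\lambda$. Uniqueness of intermediate extensions then gives the theorem.

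For the support, I would use the microlocal identity that $SS(\operatorname{FT}(\mathbf{L}_\Lambda)) \subset V^* \times V$ equals $SS(\mathbf{L}_\Lambda) \subset V \times V^*$ after swapping the factors, and that the projection of $SS$ to the base gives the support. The singular support of $\mathbf{L}_\Lambda$ is contained in the union of conormal bundles to the strata $\Sigma_\mu \subset V = \Sym^n(W)$ indexed by root-multiplicity partitions $\mu \vdash n$. A direct tangent-space computation at a point $f = \prod_{l=1}^k (x-q_l)^2 \prod_{m=1}^{n-2k}(x-r_m) \in \Sigma_{(2^k,1^{n-2k})}$ shows that its conormal fiber is spanned in $V^*$ by the evaluation functionals $v_{q_1}^n,\ldots,v_{q_k}^n$, whose union over this stratum sweeps out precisely $\overline{U_k}$. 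The long-first-row hypothesis $k \leq (n+1)/2$ should then be used to rule out additional conormal components (coming from deeper strata with triple roots or more than $k$ coincidences) that could enlarge the support.

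For the local system on $U_k$, I would fix a generic $y = \sum_{i=1}^k c_i v_{p_i}^n \in U_k$ and compute the stalk $(\operatorname{FT}(\mathbf{L}_\Lambda))_y$ by a stationary-phase / vanishing-cycle analysis of the linear functional $\phi_y \colon f \mapsto \sum_i c_i f(p_i)$ on $\Sym^n(W)$. Computing $d\phi_y$ stratum by stratum, the main component of the critical locus on $\Sigma_{(2^k,1^{n-2k})}$ consists of polynomials whose double roots are exactly $p_1,\ldots,p_k$ and whose $n-2k$ simple roots are free, forming a copy of $\Conf_{n-2k}(\P^1 \smallsetminus \{p_i\})$. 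Over this component, the restriction of $\mathcal{L}_\Lambda$ is governed by the branching of $\Lambda$ under $S_{n-2k} \times S_k \subset S_n$ (where the $S_k$ factor permutes the labels of the double roots $p_i$). By the long-first-row hypothesis, Pieri's rule isolates the piece $\operatorname{triv}\boxtimes \lambda$ with multiplicity one in the relevant induction/restriction. Varying $y$ over $U_k$ braids the $p_i$ in $\P^1$, and the stalk transports via the composition $\pi_1(U_k) \to \pi_1(\Conf_k(\P^1)) = B_k(\P^1) \to S_k \to \mathrm{GL}(\lambda)$, matching $\mathcal{R}_\lambda$ by construction.

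The main obstacle is the stationary-phase step: the critical locus of $\phi_y$ has several components (including ``crossed'' ones in which some $p_i$'s appear among the simple rather than double roots of $f$, and contributions from strata $\Sigma_\mu$ with $\mu \neq (2^k,1^{n-2k})$), and one must argue that only the main component just described survives after projecting onto the $\Lambda$-isotypic part of the local system. The long-first-row hypothesis is precisely what should make this work, via the multiplicity-one Pieri decomposition of $\operatorname{Ind}_{S_{n-k}\times S_k}^{S_n}(\operatorname{triv}\boxtimes \lambda)$; without it, further isotypic pieces (and accordingly further critical loci) would contribute and the support would genuinely exceed $\overline{U_k}$.
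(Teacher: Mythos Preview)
Your plan has a real gap, and it sits earlier than the stationary-phase step you flag. To bound the support of $\Phi(\mathbf{L}_\Lambda)$ you compute the conormal fiber to the single stratum $\Sigma_{(2^k,1^{n-2k})}$ and observe that it sweeps out $\overline{U_k}$. But $SS(\mathbf{L}_\Lambda)$ is an invariant of the sheaf, not of the ambient stratification: you have neither shown that this particular conormal actually occurs in $SS(\mathbf{L}_\Lambda)$, nor --- more seriously --- that the conormals to $\Sigma_{(2^j,1^{n-2j})}$ for $j>k$ (which project onto the strictly larger $\overline{U_j}$) do \emph{not} occur. The long-first-row bound $k\le (n+1)/2$ is purely numerical and cannot by itself distinguish $\mathbf{L}_\Lambda$ from $\mathbf{L}_{\Lambda'}$ with $|\lambda'|>k$; what actually rules out the extra conormals is the specific shape of $\Lambda$, and establishing this amounts to computing the characteristic cycle of $\mathbf{L}_\Lambda$, which is in general no easier than (and is usually deduced \emph{from}) its Fourier transform. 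The same circularity reappears in your stalk computation: deciding which critical components survive requires knowing the restriction of the IC sheaf $\mathbf{L}_\Lambda$ to each stratum $\Sigma_\mu$, which is again the hard input you are missing.

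The paper sidesteps both difficulties by a different, inductive argument. Instead of attacking $\mathbf{L}_\Lambda$ directly it applies $\Phi$ to the larger reducible sheaf $\mathbf{L}_{\bar\Lambda}$ with $\bar\Lambda=\operatorname{Ind}_{S_k\times S_{n-k}}^{S_n}(\lambda\boxtimes I)$, whose Fourier transform is the explicit pushforward $\pi^\perp_* p^*_{\mathcal E^\perp}\widetilde L_\lambda$ (this is where the geometry of $\mathcal E^\perp$ and Lemma~\ref{secantlemma} enter). Pieri's rule gives $\bar\Lambda=\Lambda\oplus\bigoplus_{|\omega|<k}a_\Omega\,\Omega$, so by induction on $k$ every summand of $\Phi(\mathbf{L}_{\bar\Lambda})$ except $\Phi(\mathbf{L}_\Lambda)$ is already known to be an $\mathbf{R}_\omega$ supported on $\Sec^{k-1}$; restricting to $U_k$ then reads off $\Phi(\mathbf{L}_\Lambda)|_{U_k}=\mathcal R_\lambda$, and irreducibility of $\Phi(\mathbf{L}_\Lambda)$ finishes. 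No singular-support or vanishing-cycle computation is needed.
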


\begin{example} \label{regrep}
Let $\Lambda$ be the standard representation of $S_n$. In this case we can verify Theorem \ref{main}
using the facts that 1) the Fourier transform $\Phi$ agrees with the Radon transform $R$ up to constant complexes, and 2) 
 the Radon transform of a complex $\mathcal F$ on a subvariety $Y$ in $\P^n$ is given by monodromy of hyperplane sections of the cone over $Y$ with coefficients in the lift of $\mathcal F$ (see \cite{B}). 

Take $Y=X$, the rational normal curve in $\P(V^*)$, and the constant sheaf on $X$. A generic 
hyperplane section of the cone over $X$ in $V^*$ is isomorphic to $\P^1$ punctured at $n$ points,
and a basis of $H^1$ of this hyperplane section is formed by small circles around the punctures. It is
clear with this basis that monodromy of hyperplane sections yields the defining representation of $S_n$
(i.e. $\Lambda \oplus \rm triv$). 
Since the trivial representation gives us a constant sheaf, we see that (up to constant complexes) the Radon transform $R$ sends 
${\bf R}_{\lambda}$ to $ {\bf L}_{\Lambda}$
(notice that $\Sec^1(X)=X$, $\lambda$ is the trivial representation of $S_1,$ and ${\mathcal R}_{\lambda}$
is the trivial local system). 
Using involutivity and the fact that $\Phi$ sends irreducible perverse sheaves to irreducible ones, we obtain $\Phi({\bf L}_{\Lambda})={\bf R}_{\lambda}$.

\end{example}

\subsection*{Main theorem for any curve}

 Let $C$ be a smooth projective curve of genus $g$. Let $C^{(n)}_{\rm dist}$ be the open subvariety
of $C^{(n)}$ (the symmetric power of $C$) consisting of the $n$-tuples of distinct points of $C$. For a 
representation $\Lambda$ of $S_n$, consider the local system ${\mathcal L}_{\Lambda}^{C, \rm big}$ 
on $C^{(n)}_{\rm dist}$ corresponding to the representation of $\pi_1(C^{(n)}_{\rm dist})$ which is the
pullback of $\Lambda$ via the natural map $\pi_1(C^{(n)}_{\rm dist}) \to S_n$ . 
Let $n>2g-2$ so that the classical Abel-Jacobi map $C^{(n)} \to {\rm Pic}^n(C)$ is a fibration.

{\indent} Restricting the local system ${\mathcal L}_{\Lambda}^{C, \rm big}$ 
to a fiber of the Abel-Jacobi map over a degree $n$ line bundle $M$ on $C$, we obtain a local system ${\mathcal L}^C_{\Lambda}$ 
on an open subset of the vector space $\P(H^0(M))$. From this local system we construct a perverse sheaf ${\bf L}^C_{\Lambda}$ on the vector space $H^0(M)$
as before by taking the intermediate extension of the lift of the local system ${\mathcal L}^C_{\Lambda}$ to the cone.
Denote by $\Sec^k(C)$
the secant variety of $C$ embedded into $\P(H^0(M)^*)$, and by ${\bf R}^C_{\lambda}$ the perverse
sheaf supported on the cone over $\Sec^k(C)$ constructed as before by intermediate extension of the lift
of ${\mathcal L}_{\lambda}^{C, \rm big}$ via the bundle map (see Lemma \ref{secantlemma2}(c)). Let $\lambda$ still denote the partition obtained
from $\Lambda$ by deleting its first row. We have a result similar to Theorem \ref{main}:

\begin{theorem} \label{abeljacobi}
If $|\lambda|\leq \frac{n+1}{2}-g$, then 
$$\Phi ({\bf L}^C_{\Lambda})={\bf R}^C_{\lambda}$$ 
\end{theorem}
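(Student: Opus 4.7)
The plan is to mirror the strategy of Theorem \ref{main}, exploiting that, locally at a smooth point of any projective curve, the geometry of secant varieties is analytically indistinguishable from the rational normal curve case. The main steps are: establish Lemma \ref{secantlemma2}, identify the support of $\Phi({\bf L}^C_\Lambda)$, match the local system on the smooth open stratum, and conclude by uniqueness of intermediate extension.

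The geometric setup is controlled by Riemann--Roch: the numerical hypothesis $|\lambda| \leq \tfrac{n+1}{2} - g$ implies that for any effective degree $|\lambda|$ divisor $D$ on $C$, $\deg(M - D) \geq 2g - 1$, so $H^1(M-D)=0$ and $|\lambda|$ distinct points of $C$ span a unique projective $(|\lambda|-1)$-plane in $\P(H^0(M)^*)$. The sharper inequality further ensures that a generic point of $\Sec^{|\lambda|}(C)$ lies on a unique secant $(|\lambda|-1)$-plane, yielding the fiber bundle structure of $\Sec^{|\lambda|}(C) - \Sec^{|\lambda|-1}(C)$ over $C^{(|\lambda|)}_{\rm dist}$ required for Lemma \ref{secantlemma2}.

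For the support, the singular support of ${\bf L}^C_\Lambda$ is contained in the union of conormals to the secant strata of the discriminant cone in $H^0(M)$; a tangent-space computation identifies the projective dual of the $j$-th such stratum with the cone over $\Sec^j(C)$. Microlocal constraints on the Fourier--Deligne transform then restrict the support of $\Phi({\bf L}^C_\Lambda)$ to such a cone, and the long-first-row hypothesis on $\Lambda$ selects $j=|\lambda|$. To identify the local system on the smooth stratum, I would use that any point of $C$ has an analytic neighborhood isomorphic to one on $\P^1$, so that the local geometry of $\Sec^{|\lambda|}(C)$ near a generic smooth point depends only on this local data; the local system must then agree with the one computed in Theorem \ref{main}, namely $\mathcal{R}_\lambda$. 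Since $\Phi({\bf L}^C_\Lambda)$ is irreducible, it is the IC extension of this local system, which is precisely ${\bf R}^C_\lambda$.

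The main obstacle, I expect, is the precise support identification. For $\P^1$ one has a transitive $PGL_2$-action and can leverage Example \ref{regrep} and the Radon transform; no such symmetry is available for a general curve, so the identification of duals of discriminant strata with secant varieties, and the microlocal argument restricting $\Phi({\bf L}^C_\Lambda)$ to a single cone, must be carried out intrinsically via characteristic cycles. A useful auxiliary device is a generic pencil $V' \subset H^0(M)$ inducing a degree-$n$ map $C \to \P(V')^*$: although it does not reduce Theorem \ref{abeljacobi} to Theorem \ref{main} globally, it furnishes a concrete local model for matching the two sides on an open piece.
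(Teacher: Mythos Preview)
Your approach diverges from the paper's, and the step where you identify the local system on the open stratum $U^C_k$ by ``local analytic comparison with $\P^1$'' is a genuine gap. A local system is a representation of the fundamental group, a global invariant; the fact that $C$ is locally analytically isomorphic to $\P^1$ tells you at best the stalk of $\Phi({\bf L}^C_\Lambda)|_{U^C_k}$ but says nothing about monodromy along loops coming from $\pi_1(C)^k \subset \pi_1(C^{(k)}_{\rm dist})$. There are many rank-$(\dim\lambda)$ local systems on $U^C_k$ whose behavior near the discriminant agrees with the one defining ${\bf R}^C_\lambda$, so local analytic data cannot single out the correct one. Your microlocal sketch for the support is likewise incomplete: containment of the singular support in a union of conormals does not by itself determine which secant cone carries $\Phi({\bf L}^C_\Lambda)$; that would require the full characteristic cycle, which you do not compute.

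The paper bypasses both issues with a global functorial computation. It introduces the subbundle $\mathcal{E}_C \subset C^{(k)} \times H^0(M)$ with fiber $H^0(M(-D))$ over $D \in C^{(k)}$, proves (Lemma~\ref{Ind2}) that ${\bf L}^C_{\bar\Lambda} = (\pi_C)_* (p_{\mathcal{E}_C})^* L^{C,\rm big}_\lambda$ for $\bar\Lambda = {\rm Ind}_{S_k\times S_{n-k}}^{S_n}(\lambda\boxtimes I)$, and applies the compatibility of $\Phi$ with passage to orthogonal subbundles (as in Proposition~\ref{coroflemmas}) to obtain $\Phi({\bf L}^C_{\bar\Lambda}) = (\pi_C^\perp)_* (p_{\mathcal{E}_C^\perp})^* L^{C,\rm big}_\lambda$. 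Lemma~\ref{secantlemma2} then shows that on $U^C_k$ this is, by construction, the local system underlying ${\bf R}^C_\lambda$. Pieri's rule and decreasing induction on the length of the first row of $\Lambda$ isolate $\Phi({\bf L}^C_\Lambda)$ from the remaining summands of $\Phi({\bf L}^C_{\bar\Lambda})$, all of which are supported on $\Sec^{k-1}(C)$ by the inductive hypothesis; this handles support and local system simultaneously. Your Riemann--Roch argument for Lemma~\ref{secantlemma2} is correct and matches the paper, but the rest of the proof needs the bundle $\mathcal{E}_C$ and the induction, not a local-to-global transfer from Theorem~\ref{main}.
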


\subsection*{Relation with M. Brion's result}

Let $\Lambda$ and $\Omega$ be two Young diagrams with $|\Lambda|=|\Omega|=n$,
$|\lambda|\leq \frac{n+1}{2}$ and $|\omega|\leq \frac{n+1}{2}$. 
 (For a partition denoted by a capital Greek letter, we still denote the partition obtained by deleting its first row by the corresponding
lowercase Greek letter). Recall that for a representation $\Sigma$ of $S_n$ we have the Kronecker 
coefficient $k_{\Lambda, \Omega}^{\Sigma}=[\Sigma:\Lambda \otimes \Omega]$; if $|\sigma|=|\lambda|+|\omega|$
we can also define the Littlewood-Richardson coefficient $c_{\lambda,\omega}^{\sigma}=[\sigma:{\rm Ind}_{S_{|\lambda|} \times S_{|\omega|}}^{S_{|\sigma|}} \lambda \boxtimes \omega]$. 
We prove:

\begin{proposition} \label{brion}   Suppose that $|\lambda|+|\omega| \leq n/2$. Then 

{\rm (a)} For any Young diagram $\Sigma$ with the first row strictly shorter than $n-|\lambda|-|\omega|$,
the Kronecker coefficient $m_{\Lambda, \Omega}^{\Sigma}$ is zero. 

{\rm (b)} For any Young diagram $\Sigma$
with the first row equal to $n-|\lambda|-|\omega|$, the Kronecker coefficient $k_{\Lambda, \Omega}^{\Sigma}$ is
equal to the Littlewood-Richardson coefficient $c_{\lambda, \omega}^{\sigma}$.
\end{proposition}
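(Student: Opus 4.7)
The plan is to combine Theorem \ref{main} with the standard fact that the Fourier-Deligne transform exchanges tensor product and additive convolution. By definition of the Kronecker coefficients, the tensor product of local systems $\mathcal L_\Lambda \otimes \mathcal L_\Omega$ on $\Conf_n(\P^1)$ decomposes as $\bigoplus_\Sigma k^\Sigma_{\Lambda,\Omega}\, \mathcal L_\Sigma$, so its intermediate extension $\mathbf T$ to $\Sym^n(W)$ satisfies
\[
\mathbf T \;=\; \bigoplus_\Sigma k^\Sigma_{\Lambda,\Omega}\, \mathbf L_\Sigma .
\]
Applying Theorem \ref{main} to each factor together with the Fourier exchange formula, I would identify $\Phi(\mathbf T)$ (up to a degree shift) with the additive convolution $\mathbf R_\lambda * \mathbf R_\omega$ on $\Sym^n(W^*)$.

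Next I would compute this convolution geometrically. The Minkowski sum of the cones over $\Sec^{|\lambda|}$ and $\Sec^{|\omega|}$ equals the cone over $\Sec^{|\lambda|+|\omega|}$, which under the hypothesis $|\lambda|+|\omega|\le n/2$ lies in the smooth range of Lemma \ref{secantlemma}. A generic point $\sum_{i=1}^{|\lambda|+|\omega|} c_i x_i$ of the top stratum (with distinct $x_i$ on the rational normal curve) has exactly $\binom{|\lambda|+|\omega|}{|\lambda|}$ preimages under the addition map, indexed by the $|\lambda|$-element subsets of the $x_i$. Hence the monodromy of $\mathbf R_\lambda * \mathbf R_\omega$ on this top stratum is $\mathrm{Ind}_{S_{|\lambda|}\times S_{|\omega|}}^{S_{|\lambda|+|\omega|}}(\lambda\boxtimes\omega) = \bigoplus_\sigma c^\sigma_{\lambda,\omega}\, \sigma$ by the definition of the Littlewood-Richardson coefficients. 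By the decomposition theorem,
\[
\mathbf R_\lambda * \mathbf R_\omega \;=\; \bigoplus_{|\sigma|=|\lambda|+|\omega|} c^\sigma_{\lambda,\omega}\, \mathbf R_\sigma \;\oplus\; \bigl(\text{summands supported on } \Sec^{|\lambda|+|\omega|-1}\bigr),
\]
and any $\mathbf R_{\sigma'}$ appearing in the smaller-support summands has $|\sigma'| < |\lambda|+|\omega| \le (n+1)/2$, so it still falls within the scope of Theorem \ref{main}.

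To conclude, I match the two expressions for $\Phi(\mathbf T)$. The bound $|\lambda|+|\omega|\le n/2$ implies that every $\Sigma$ with first row of length at least $n-|\lambda|-|\omega|\ge n/2$ has long first row, so Theorem \ref{main} gives $\Phi(\mathbf L_\Sigma)=\mathbf R_\sigma$. Comparing the two sides of the identity $\Phi(\mathbf T) \cong \mathbf R_\lambda * \mathbf R_\omega$ (restricted to the open stratum, where both reduce to direct sums of the simple local systems $\mathcal L_\Sigma$) yields the desired multiplicity equalities: $k^\Sigma_{\Lambda,\Omega}=0$ whenever the first row of $\Sigma$ is strictly less than $n-|\lambda|-|\omega|$, giving part (a), and $k^\Sigma_{\Lambda,\Omega}=c^\sigma_{\lambda,\omega}$ whenever the first row of $\Sigma$ equals $n-|\lambda|-|\omega|$, giving part (b).

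The main obstacle will be making the Fourier-tensor-convolution exchange $\Phi(\mathbf T) \cong \mathbf R_\lambda * \mathbf R_\omega$ fully rigorous: the naive tensor product of two perverse sheaves is not itself perverse, so some care is required in passing from the open-stratum tensor of local systems to the full convolution on the dual vector space. I expect to handle this by first applying the standard Fourier exchange on the open stratum where all objects are smooth local systems, and then using that both sides agree on their common open stratum of support to pin down the global identification via intermediate extension.
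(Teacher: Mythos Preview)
Your overall strategy---Fourier exchanges tensor product with convolution, the convolution is supported on the cone over $\Sec^{|\lambda|+|\omega|}$, and on its open stratum it realizes the induced representation---is exactly the paper's. The gap is the one you flag yourself, and your proposed patch does not close it.

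The identity you want is not $\Phi(\mathbf T)\cong \mathbf R_\lambda * \mathbf R_\omega$. The Fourier exchange formula gives
\[
\Phi\bigl(\mathbf L_\Lambda \otimes^{L} \mathbf L_\Omega\bigr)\;\cong\;\mathbf R_\lambda * \mathbf R_\omega,
\]
with the \emph{derived} tensor product on the left. This object is not the same as your $\mathbf T=\bigoplus_\Sigma k^\Sigma_{\Lambda,\Omega}\,\mathbf L_\Sigma$, which is the (semisimple) intermediate extension of $\mathcal L_\Lambda\otimes\mathcal L_\Omega$ from the open locus. Agreeing on an open stratum and then ``pinning down via intermediate extension'' cannot help, because neither $\mathbf L_\Lambda\otimes^L\mathbf L_\Omega$ nor $\mathbf R_\lambda*\mathbf R_\omega$ is a priori perverse, let alone an IC sheaf. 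The paper resolves this with a separate, global argument (Lemma~\ref{somesheafisperverse} and Corollary~\ref{tensorproductisperverse}): the fiber product $(\P^1)^n\times_{\P^n}(\P^1)^n$ is a local complete intersection, so its shifted constant sheaf is perverse, hence $\phi_*\underline{\C}[n]\otimes^L\phi_*\underline{\C}[n]$ is perverse (finite pushforward), and therefore so is each summand $\mathbf L_\Lambda\otimes^L\mathbf L_\Omega$. Once perversity is established one can speak of Jordan--H\"older multiplicities, and exactness of $\Phi$ on perverse sheaves transports them to the convolution side. Note that the paper works with \emph{subquotients}, not direct summands; it never claims $\mathbf L_\Lambda\otimes^L\mathbf L_\Omega=\mathbf T$.

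A second issue: your appeal to the decomposition theorem for $\mathbf R_\lambda*\mathbf R_\omega$ is unjustified, since the addition map $s:V^*\times V^*\to V^*$ is not proper (even after restricting to the cones over the secant varieties). The paper avoids this by passing to the bundles $\mathcal E^\perp_{|\lambda|}$, $\mathcal E^\perp_{|\omega|}$, $\mathcal E^\perp_{|\sigma|}$ over the projective bases $\P^{|\lambda|}$, $\P^{|\omega|}$, $\P^{|\sigma|}$ and a commutative diagram in which the addition $s$ is compared, via proper base change, with the multiplication map $m:\P^{|\lambda|}\times\P^{|\omega|}\to\P^{|\sigma|}$; the induced--representation identity then comes from the computation $m_!(\widetilde L_\lambda\boxtimes\widetilde L_\omega)=\bigoplus_\sigma c^\sigma_{\lambda,\omega}\widetilde L_\sigma$ already established in Lemma~\ref{Ind}. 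Your direct count of the $\binom{|\lambda|+|\omega|}{|\lambda|}$ preimages is the correct geometric picture over a generic point, but it needs this bundle framework (or the perversity input above) to yield a statement about composition factors of the global convolution.
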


\begin{remark}
This result is the same as Corollary 2 in Section 3.4 of \cite{Brion} except for our additional assumption $|\lambda|+|\omega| \leq n/2$.
Without this assumption our method encounters some technical difficulties.
\end{remark}

\subsection*{Organization of the paper}

Section 2 is devoted to background on the Fourier-Deligne transform. 
In Section 3 we prove a number of lemmas about the Fourier-Deligne transform and apply them to prove Theorem \ref{main}. 
In Section 4 we generalize Theorem \ref{main} to local systems on fibers of the Abel-Jacobi map. 
In Section 5 we deduce from Theorem \ref{main} a part of  M. Brion's result as Proposition \ref{brion}.

\subsection*{Acknowledgements}
I am very grateful to Roman Bezrukavnikov for suggesting this problem and for many useful discussions and ideas. I am thankful to J.M. Landsberg for 
helping me locate the reference for M. Brion's result. I also thank A. Beilinson, S. Bloch, A. Libgober, L.-H. Lim, M. Mella, L. Oeding, R. Piene, and B. Sturmfels for useful conversations. 

\section{Fourier-Deligne transform}

The general reference for this section is the book \cite{KS}; see also the book \cite{KW} 
and the survey article \cite{I}, 
in which the theory is described in the $\ell$-adic setting, and the papers \cite{B}, \cite{BMV1}, \cite{BMV2},
in which the Fourier-Deligne transform is studied. 
Let $Y$ be an algebraic variety over $\C$ (or a real analytic stratified space). We work with the 
derived category of constructible sheaves $ D^b_c (Y)$. 
For a morphism of complex algebraic 
varieties (or stratified spaces) $f: Y \to Z$ we have the functors $ f_*, f_! : D^b_c (Y) \to D^b_c (Z)$ and
$ f^*, f^! : D^b_c (Z) \to D^b_c (Y)$ (note that we write $f_*$ for $R f_*$ and
 similarly for the other functors).

Perverse sheaves ${\rm Perv}(Y)$ form an abelian subcategory of $D^b_c (Y)$. We can form an 
 irreducible perverse sheaf, the IC extension ${\rm IC}(Y,\mathcal L)$ (also referred to as 
Goresky-MacPherson or intermediate extension of $\mathcal L$ to $Y$), for a local system $\mathcal L$ on 
a smooth locally closed subvariety $U \subset Y$. The category of local systems on a manifold $U$ is equivalent to the category
of representations of ${\pi}_1(U)$. We also have a classification theorem (see \cite{BBD}) which says that all irreducible 
perverse sheaves on $Y$ are of the form ${\rm IC}(Y,\mathcal L)$ for some $U$ and $\mathcal L$.

\begin{definition} \label{fourierdefn}
Let $V$ be a vector space over $\C$ and $p_1: V \times V^* \to V$, $p_2: V \times V^* \to V^*$
be the projections. Let $j: Q = \{(v, \xi) \ | \ {\rm Re} \langle v, \xi \rangle \leq 0 \} \hookrightarrow V \times V^*$
and ${\delta}_Q = j_* ({\C}_{Q})$. 
The Fourier-Deligne transform ${\rm \Phi: D^b_c (V) \to D^b_c (V^*)}$ is defined by
${\rm \Phi (\mathcal{F}) = (p_2)_! (p_1^*(\mathcal{F}) \otimes {\delta}_Q)}$.
\end{definition}

Note that sometimes we refer to $\Phi$ simply as "the Fourier transform."

\begin{remark} (a) $\Phi$  is defined using a stratification of $V \times V^*$ which is only real analytic,
but if $\mathcal{F}$ is constructible with respect to a complex analytic stratification on $V$, $\Phi(\F)$
is likewise on $V^*$.

(b)  $\Phi$ sends the category $D^b_{c, \C^*} (V)$ of monodromic (i.e. ${\C}^*$-equivariant) objects 
on $V$ to $D^b_{c, \C^*} (V^*)$; in this text we always
 work with $\Phi : D^b_{c, \C^*} (V) \to D^b_{c, \C^*} (V^*)$.

(c) $\Phi$ sends perverse sheaves to perverse sheaves and induces an exact functor between the abelian 
categories of perverse sheaves on $V$ and $V^*$.
\end{remark}

\begin{remark} $\Phi$  can be defined for $\ell$-adic sheaves  
on $V$ over ${\mathbb F}_q$; then it is
compatible (under the Frobenius trace) with the Fourier transform for functions on $V$ (see \cite{I}).
\end{remark}

\begin{remark}

The Weyl algebra $\mathfrak W$ is the free algebra $\C \langle x_0,...x_n,{\partial}_0,...,{\partial}_n \rangle$
modulo the relations $x_i {\partial}_j = {\partial}_j x_i$ for $i \neq j$ and ${\partial}_i x_i - x_i {\partial}_i =1$.
The regular holonomic D-modules ${\rm {D-mod}^{r.h.}}(V)$ on $V$ form a subcategory of the 
category of modules over $\mathfrak W$.
The Riemann-Hilbert correspondence ${\rm RH}: {\rm {D-mod}^{r.h.}}(V) \to {\rm Perv}(V) $
is an equivalence of categories. Under $\rm RH$ the Fourier-Deligne transform defined above is compatible with 
the Fourier transform $F$ for $\mathfrak W$-modules given as follows. If $M$ is a $\mathfrak W$-module, $F(M)=M$ as a vector space and the new action of $\mathfrak W$ on $m \in M$ is $x_i^{\rm new}(m)={\partial}_i(m)$, ${\partial}_i^{\rm new}(m)= - x_i(m)$

\end{remark}

In the course of proving our main result we also need the following more general definition 
of the (relative) Fourier transform:

\begin{definition} Let $E$ be a complex vector bundle over a complex algebraic variety $Y$, $E^*$ the dual bundle, and 
$p_1: E \times_Y E^* \to E$, $p_2: E \times_Y E^* \to E^*$
be the projections. Let $j: Q = \{((x, v), (x, \xi)) \ | \ {\rm Re} \langle v, \xi \rangle \leq 0 \} \hookrightarrow E \times_Y E^*$
and ${\delta}_Q = j_* ({\C}_{Q})$. 
The Fourier-Deligne transform ${\rm \Phi_E: D^b_c (E) \to D^b_c (E^*)}$ is defined by
${\rm \Phi_E (\mathcal{F}) = (p_2)_! (p_1^*(\mathcal{F}) \otimes {\delta}_Q)}$.
\end{definition}

We note that the remarks above apply here, and we will work with $\Phi_E : D^b_{c, \C^*} (E) \to D^b_{c, \C^*} (E^*)$,
where $\C^*$ acts in the fibers of $E$ and $E^*$.

\section{Proof of the main theorem for $\P^1$}

Let $W \cong \C^2$ be a two-dimensional complex vector space. 
Consider the bilinear pairing $\omega: {\Sym}^k(W) \times {\Sym}^{n-k}(W) \to {\Sym}^n(W)$
which corresponds to multiplication of polynomials.
The linear maps $\omega(v,-)$, obtained by fixing the first coordinate in the pairing $\omega$
to be $v$, combine for all $v$ to give a map of vector bundles over ${\P}^k=\P({\Sym}^k(W))$, namely 
$ i : \mathcal{E}= \O_{\P^k}(-1) \otimes {\Sym}^{n-k}(W) \to \O_{\P^k} \otimes {\Sym}^n(W)$.
We denote the bundle map for $\mathcal{E}$ by $p_{\mathcal{E}}: \mathcal{E} \to {\P}^k$.
Note that the map $i$ makes $\mathcal{E}$ a subbundle of the trivial bundle 
$\O_{\P^k} \otimes {\Sym}^n(W)$, so the total space of $\mathcal{E}$ embeds into
${\P}^k \times {\Sym}^n(W)$. Hence via the second projection we get a map
 $\pi : \mathcal{E} \to {\Sym}^n(W)$, $\pi = p_2 \circ i$. (Note that we sometimes use the same notation 
for a vector bundle and its total space as we did for $\mathcal{E}$.) See the diagram below.

\begin{tikzpicture}
  \matrix (m) [matrix of math nodes,row sep=3em,column sep=3em,minimum width=2em, ampersand replacement=\&]
  {
     \mathcal{E} \& {\P}^k \times {\rm Sym}^n(W) \& {\rm Sym}^n(W), \& \pi = p_2 \circ i\\
     \ \&  {\P}^k \& \ \\};
  \path[-stealth]
    (m-1-1) 
          edge node [above] {$i$} (m-1-2)
          edge node [above] {$p_{\mathcal{E}}$} (m-2-2)

    (m-1-2) edge node [right] {$p_1$} (m-2-2)         
            edge node [above] {$p_2$} (m-1-3);
\end{tikzpicture}

Recall that for a positive integer $r$ and a representaion $\rho$ of $S_r$ we introduced 
the local system ${\mathcal L}_{\rho}$ on the open set ${\P}^r - D_r$ in ${\P}^r= \P(\Sym^r(W))$ where $D_r$
is the discriminant locus. We denote by ${\widetilde L}_{\rho}$ the intermediate extension of ${\mathcal L}_{\rho}$
to ${\P}^r$ and by ${\bf L}_{\rho}$ the intermediate extension to $\Sym^r(W)$ of the lift
of ${\mathcal L}_{\rho}$ to the complement of the cone over $D_r$ in $\Sym^r(W)$.

\begin{lemma}\label{Ind} 
Let $\lambda$ be a representation of $S_k$ and let $I$ denote the trivial representation of $S_{n-k}$. 
Consider the induced
representation of $S_n$ given by $\bar{\Lambda}={\rm Ind}^{S_n}_{S_k \times S_{n-k}} (\lambda \boxtimes I)$.
Then, with the above notation, we have ${\bf L}_{\bar{\Lambda}} = {\pi}_*  p_{\mathcal{E}}^*  {\widetilde L}_{\lambda}$.

\end{lemma}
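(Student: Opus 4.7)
I would verify the identity first on a large open subset of $\Sym^n(W)$ where $\pi$ restricts to a finite \'etale covering, and then promote it to all of $\Sym^n(W)$ by a smallness argument showing that $\pi_* p_{\mathcal{E}}^* {\widetilde L}_{\lambda}$ is itself a simple perverse sheaf, namely the intermediate extension of its generic restriction.

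For the generic computation, let $U \subset \Sym^n(W)$ be the complement of the cone over $D_n$. Above $U$ the map $\pi$ is a finite \'etale covering of degree $\binom{n}{k}$: for a polynomial $h$ with $n$ distinct roots, the degree $k$ factors of $h$ are in bijection with $k$-element subsets of its roots, so the fiber is identified with the coset space $S_n / (S_k \times S_{n-k})$. A factor of a polynomial with distinct roots itself has distinct roots, so $\pi^{-1}(U) \subset p_{\mathcal{E}}^{-1}(\P^k - D_k)$, where $p_{\mathcal{E}}^* {\widetilde L}_{\lambda}$ is a genuine local system whose monodromy pulls back $\lambda$ along the projection $S_k \times S_{n-k} \to S_k$. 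Since pushforward along a finite \'etale cover induces representations, this yields $(\pi_* p_{\mathcal{E}}^* {\widetilde L}_{\lambda})|_U = \mathcal{L}_{\bar\Lambda}$, matching ${\bf L}_{\bar\Lambda}|_U$.

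For the extension step I would collect the following geometric facts: $\pi$ is proper because $\mathcal{E}$ embeds as a closed subvariety of $\P^k \times \Sym^n(W)$ and the second projection is proper; $p_{\mathcal{E}}^* {\widetilde L}_{\lambda}$ is, up to shift, the intermediate extension on the smooth total space $\mathcal{E}$, since $p_{\mathcal{E}}$ is a vector bundle; for any nonzero $h$ the fiber $\pi^{-1}(h)$ is finite, since a degree $k$ divisor $f \mid h$ determines $g = h/f$ uniquely and $h$ has only finitely many such divisors; and the unique positive-dimensional fiber is the zero section $\pi^{-1}(0) = \P^k$, of dimension $k$. Since $\{0\}$ has codimension $n+1$ in $\Sym^n(W)$, $\pi$ is small when $2k < n+1$ and semismall when $2k = n+1$, matching exactly the long first row condition of Theorem \ref{main}. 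It follows that $\pi_* p_{\mathcal{E}}^* {\widetilde L}_{\lambda}$ is perverse, and its unique IC summand of full support equals ${\bf L}_{\bar\Lambda}$ by the generic computation.

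The main obstacle is excluding additional IC summands in the decomposition; by the fiber analysis their only possible support is $\{0\}$. In the strictly small range $2k < n+1$, smallness forbids them outright, and the conclusion is immediate. At the semismall boundary $2k = n+1$, one must examine the stalk $(\pi_* p_{\mathcal{E}}^* {\widetilde L}_{\lambda})_0$, which by proper base change is the cohomology of $\P^k$ with coefficients in ${\widetilde L}_{\lambda}$, and check that it matches the stalk of ${\bf L}_{\bar\Lambda}$ at $0$ without producing extra skyscraper contributions.
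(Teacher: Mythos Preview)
Your approach is genuinely different from the paper's and, within the range $2k<n+1$, essentially correct. Two issues, one small and one substantive.

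The small one: you call $\pi_* p_{\mathcal{E}}^* \widetilde L_\lambda$ a \emph{simple} perverse sheaf, but $\bar\Lambda = {\rm Ind}^{S_n}_{S_k\times S_{n-k}}(\lambda\boxtimes I)$ is almost never irreducible, so ${\bf L}_{\bar\Lambda}$ is a direct sum of several IC sheaves. What smallness buys you is that the pushforward is the intermediate extension of its generic part, not that it is simple; your phrase ``its unique IC summand of full support'' should be ``its summand supported on all of $\Sym^n(W)$'', which is already all of it when $\pi$ is small.

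The substantive one: the lemma carries no hypothesis on $k$, and the paper genuinely uses it outside your range---Corollary~3.8 asserts semisimplicity of $\pi^\perp_* p_{\mathcal{E}^\perp}^* \widetilde L_\lambda$ even when $|\lambda|>\frac{|\Lambda|+1}{2}$, and that comes straight from Lemma~\ref{Ind} via Proposition~\ref{coroflemmas}. Your smallness argument only covers $2k<n+1$, leaves $2k=n+1$ to a stalk computation you do not carry out, and says nothing when $2k>n+1$. The paper sidesteps dimension counts entirely: it realizes $\widetilde L_\rho$ as the $\rho$-isotypic piece ${\rm Hom}_{S_d}(\rho,(\phi_d)_*\underline{\C}[d])$ for the finite quotient $\phi_d:(\P^1)^d\to\P^d$, factors $\phi_n = m\circ(\phi_k\times\phi_{n-k})$ through the multiplication map $m:\P^k\times\P^{n-k}\to\P^n$, and then reads off
\[
m_*(\widetilde L_\lambda\boxtimes\widetilde L_I)
= {\rm Hom}_{S_k\times S_{n-k}}\bigl(\lambda\boxtimes I,(\phi_n)_*\underline{\C}[n]\bigr)
= {\rm Hom}_{S_n}\bigl(\bar\Lambda,(\phi_n)_*\underline{\C}[n]\bigr)
= \widetilde L_{\bar\Lambda}
\]
by Frobenius reciprocity. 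Since the projectivization of $\mathcal{E}$ is exactly $\P^k\times\P^{n-k}$ with $\pi$ descending to $m$, this gives the lemma for every $k$. The point is that $m$ and the $\phi_d$ are \emph{finite}, so pushforward is $t$-exact and commutes with taking isotypic components; no smallness estimate is needed. Your argument trades this equivariant bookkeeping for geometry of the fibers of $\pi$, which is more hands-on but forces the restriction on $k$.
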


\begin{proof}

For any natural number $d$ consider the map ${\phi}_d : ({\P}^1)^d \to ({\P}^1)^{(d)} = {\P}^d $. Notice that
for any representation $\rho$ of $S_d$ we have that ${\widetilde L}_{\rho} = \rm{Hom}_{S_d} (\rho, ({\phi}_d)_* \underline{\C}[d] )$
(if $\rho$ is an irreducible representation then ${\widetilde L}_{\rho}$ is the isotypic component for the action of $S_d$ on the sheaf $({\phi}_d)_* \underline{\C}[d]$). Also let 
$m=m_{k , n-k}: {\P}^k \times {\P}^{n-k} \to {\P}^n$ denote the map given by multiplication of polynomials.

Notice that 
$\phi = {\phi}_n$ factors through ${\P}^k \times {\P}^{n-k}$ as
${\phi} = m \circ ({\phi}_k \times {\phi}_{n-k})$, as depicted in the following diagram:

\begin{tikzpicture} \label{factoring}
  \matrix (m) [matrix of math nodes,row sep=3em,column sep=6em,minimum width=2em, ampersand replacement=\&]
  {
     ({\P}^1)^n \& {\P}^k \times {\P}^{n-k} \\
     \ \&  {\P}^n \\};
  \path[-stealth]
    (m-1-1) 
          edge node [above] {${\phi}_k \times {\phi}_{n-k}$} (m-1-2)
          edge node [above] {$\phi$} (m-2-2)

    (m-1-2) edge node [right] {$m$} (m-2-2);

\end{tikzpicture}

Further, for any representations $\mu$ of $S_k$ and $\nu$ of $S_{n-k}$
we have ${\widetilde L}_{\mu} \boxtimes {\widetilde L}_{\nu} = \rm{Hom}_{S_k \times S_{n-k}} (\mu \boxtimes \nu, ({\phi}_k \times {\phi}_{n-k})_* \underline{\C}[n] )$ on ${\P}^k \times {\P}^{n-k}$. 

We have the following consequence of these observations: 
$$m_*  ({\widetilde L}_{\mu} \boxtimes {\widetilde L}_{\nu})= 
m_* \rm{Hom}_{S_k \times S_{n-k}}(\mu \boxtimes \nu, ({\phi}_k \times {\phi}_{n-k})_* \underline{\C}[n]) = $$
$$\rm{Hom}_{S_k \times S_{n-k}}(\mu \boxtimes \nu, {\phi}_* \underline{\C}[n]) =
\rm{Hom}_{S_n}(\rm{Ind}_{S_k \times S_{n-k}}^{S_n} (\mu \boxtimes \nu), {\phi}_* \underline{\C}[n])$$

Note that when we delete the zero section of the bundle $\mathcal{E}$ 
and take the quotient by the ${\C}^*$-action 
(i.e. pass to the projectivization of the vector bundle $\mathcal{E}$),
the map $\pi$ is replaced by the map $m$ above.  
Hence we obtain the statement of the lemma from
the above chain of equalities by taking $\mu=\lambda$ and $\nu = I$.

\end{proof}

 The following two lemmas on the Fourier transform are proved in \cite{KW} as 
Corollary III.13.4 and Corollary III.13.3.

\begin{lemma}
Let $L$ be a constructible complex on a complex algebraic variety $A$. Let ${F}$
be a subbundle of the trivial bundle $A \times W$ where $W$ is a vector space,
with bundle map $p_{{F}} : {F} \to A$ and inclusion $i: {F} \to A \times W$.
Let ${{F}}^{\perp}$ be the orthogonal subbundle of $A \times W^*$,
with bundle map $p_{{{F}}^{\perp}} : {{F}}^{\perp} \to A$
and inclusion $i^{\perp}: {{F}}^{\perp} \to A \times W^*$.
Then 
$$\Phi_{A \times W} ( i_* p_{{F}}^* L) =  i^{\perp}_* p_{{{F}}^{\perp}}^* L$$
\end{lemma}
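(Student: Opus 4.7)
The plan is to reduce the identity to two standard facts about the relative Fourier-Deligne transform: that it is linear over the base $A$, and that it intertwines a closed embedding of vector bundles with its dual surjection.

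First I would reduce to the case $L=\underline{\C}_A$. Writing $p:A\times W\to A$ and $q:A\times W^*\to A$ for the projections, the projection formula for the closed immersion $i$ gives $i_* p_F^* L = p^* L \otimes i_* \underline{\C}_F$. A direct application of the projection formula to the definition of $\Phi_{A\times W}$ shows that
$$\Phi_{A\times W}(p^*L \otimes \mathcal{G}) = q^*L \otimes \Phi_{A\times W}(\mathcal{G}),$$
since $p\circ p_1 = q\circ p_2$ allows the factor $p^*L$ to pass through $p_1^*$, the tensor with $\delta_Q$, and $(p_2)_!$. Hence it suffices to prove
$$\Phi_{A\times W}(i_* \underline{\C}_F) = i^\perp_* \underline{\C}_{F^\perp}.$$

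For this I would invoke the Fourier compatibility under morphisms of vector bundles over $A$. The dual of $i:F\hookrightarrow A\times W$ is a surjection $i^\vee:A\times W^*\twoheadrightarrow F^*$ whose kernel is precisely $F^\perp$. By applying proper base change and the projection formula to the cartesian square obtained from pulling back $i$ along $p_1$, one deduces
$$\Phi_{A\times W}(i_* \mathcal{F}) = (i^\vee)^* \Phi_F(\mathcal{F})$$
(up to an appropriate shift, which I suppress). Now take $\mathcal{F}=\underline{\C}_F=p_F^* \underline{\C}_A$. The Fourier transform $\Phi_F(p_F^*\underline{\C}_A)$ is the delta sheaf at the zero section, $(s_{F^*})_* \underline{\C}_A$, where $s_{F^*}:A\hookrightarrow F^*$ denotes the zero section; fiberwise this is just the classical assertion that the Fourier transform of the constant sheaf on a vector space is the skyscraper at the origin of the dual. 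Because $F^\perp = (i^\vee)^{-1}(\text{zero section})$, base change applied to the cartesian square
$$\begin{array}{ccc} F^\perp & \stackrel{i^\perp}{\hookrightarrow} & A\times W^* \\ p_{F^\perp}\downarrow & & \downarrow i^\vee \\ A & \stackrel{s_{F^*}}{\hookrightarrow} & F^* \end{array}$$
yields $(i^\vee)^* (s_{F^*})_* \underline{\C}_A = i^\perp_* p_{F^\perp}^* \underline{\C}_A = i^\perp_* \underline{\C}_{F^\perp}$, completing the argument.

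The main technical obstacle is bookkeeping of the shifts at each step---particularly in the closed-embedding compatibility and in the fiberwise Fourier computation---to ensure that they cancel and produce the shift-free formula asserted in the lemma. The geometric content is the identification $\ker(i^\vee)=F^\perp$, which converts the fiberwise delta at the origin of $F^*$ into the extension by zero of the constant sheaf on $F^\perp\subset A\times W^*$.
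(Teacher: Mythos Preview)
The paper does not actually prove this lemma: it simply cites Kiehl--Weissauer, Corollary III.13.4, so there is no argument in the paper to compare against. Your sketch is a correct and standard way to establish the result, and is in spirit the same reduction one finds in the literature: factor out $L$ via the projection formula to reduce to the constant sheaf on $F$, then use the functoriality of $\Phi$ under the closed embedding $i$ (equivalently, under the dual surjection $i^\vee$) together with the identification of $\Phi_F(\underline{\C}_F)$ with the delta sheaf along the zero section of $F^*$, and finally pull back along $i^\vee$ using the cartesian square with $F^\perp=\ker(i^\vee)$.

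Your caveat about shifts is exactly right and is the only place one needs to be careful. With the normalization the paper uses (no shift built into $\Phi$), the compatibility $\Phi_{A\times W}\circ i_* \cong (i^\vee)^*\circ \Phi_F$ carries a shift by the relative dimension, and the computation $\Phi_F(\underline{\C}_F)\cong (s_{F^*})_*\underline{\C}_A$ carries the opposite shift, so they do cancel; but you should state this explicitly rather than suppress it, since the lemma is asserted with no shift at all.
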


\begin{lemma} \label{Dir}
Let $q: S \to B$ be a proper map of complex algebraic varieties,  let ${F}$ be a vector 
bundle on $B$, let $q^* {F} $ be the pullback bundle on $S$, and 
$Q: q^* {F} \to {F}$ the map of total spaces of bundles.
Let $R:  q^* {F}^* \to {F}^* $ be the corresponding map for
the total space of the dual bundle. Then for a monodromic constructible complex $M$
on the bundle $q^* {F}$ we have
$$\Phi_{{F}} (Q_*  M) = R_*( \Phi_{q^* {F}} M)$$

 \end{lemma}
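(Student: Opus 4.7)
The plan is to unwind the definition of $\Phi_F$ and reduce the identity to a standard combination of proper base change and the projection formula, exactly as one does for the function-theoretic Fourier transform along a proper map. First, I would form the commutative diagram of Cartesian squares
\begin{equation*}
\begin{array}{ccccc}
q^*F & \xleftarrow{p'_1} & q^*F \times_S q^*F^* & \xrightarrow{p'_2} & q^*F^* \\
\downarrow Q & & \downarrow T & & \downarrow R \\
F & \xleftarrow{p_1} & F \times_B F^* & \xrightarrow{p_2} & F^*
\end{array}
\end{equation*}
where $T$ sends $(s,v,\xi)$ to $(q(s),v,\xi)$, so that both the left and right squares are Cartesian. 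Because $q$ is proper, the induced maps $Q$, $T$, and $R$ are also proper. Let $Q_F \subset F \times_B F^*$ and $Q_S \subset q^*F \times_S q^*F^*$ denote the relative Fourier loci cut out by $\mathrm{Re}\langle v,\xi\rangle \leq 0$. Since this condition depends only on the vector-bundle coordinates, one has $Q_S = T^{-1}(Q_F)$, and since the two inclusions are closed the sheaves $\delta_{Q_F}$ and $\delta_{Q_S}$ are just extensions by zero of the constant sheaf; hence $T^*\delta_{Q_F} = \delta_{Q_S}$.

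With this setup the computation is a short chain. Applying in order proper base change on the left Cartesian square (valid because $Q$ is proper), the projection formula for the proper map $T$, the identification $T^*\delta_{Q_F} = \delta_{Q_S}$, the commutativity $p_2 \circ T = R \circ p'_2$, and finally $R_! = R_*$ for the proper map $R$, one obtains
\begin{align*}
\Phi_F(Q_* M) &= (p_2)_!\bigl(p_1^* Q_* M \otimes \delta_{Q_F}\bigr) \\
&= (p_2)_!\bigl(T_*(p'_1)^* M \otimes \delta_{Q_F}\bigr) \\
&= (p_2)_!\, T_*\bigl((p'_1)^* M \otimes T^*\delta_{Q_F}\bigr) \\
&= R_!\,(p'_2)_!\bigl((p'_1)^* M \otimes \delta_{Q_S}\bigr) \\
&= R_*\,\Phi_{q^*F}(M),
\end{align*}
which is the desired identity.

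The main technical subtlety, rather than any serious obstacle, is that $\delta_Q$ is defined using a merely real-analytic stratification, so one needs proper base change and the projection formula in the $D^b_c$ setting for real-analytic stratified spaces and not only for complex-algebraic ones; this is precisely the framework of \cite{KS} and \cite{KW}, and the relevant compatibilities apply verbatim. The monodromic assumption on $M$ is needed only to remain inside the category $D^b_{c,\C^*}$ on which $\Phi$ has its good properties, and does not enter the structural identity itself.
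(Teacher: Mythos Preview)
Your argument is correct. The paper does not give its own proof of this lemma; it simply cites \cite{KW}, Corollary III.13.3, so there is nothing to compare against beyond noting that what you wrote is exactly the standard proof one finds there: set up the two Cartesian squares over $q$, use proper base change on the left square, the projection formula for the proper map $T$, the compatibility $T^*\delta_{Q_F}=\delta_{Q_S}$ (which holds because the Fourier locus is pulled back under $T$ and the inclusions are closed), and then compose along the right square. Your remark about the real-analytic nature of $\delta_Q$ and the need to work in the framework of \cite{KS}/\cite{KW} is also to the point.
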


We let ${\mathcal{E}}^{\perp}$ be the subbundle of the
 trivial bundle $\P^k \times {\rm Sym}^n (W^*)$ over ${\P}^k$ which is orthogonal
 to the subbundle $\mathcal{E}$ of $\P^k \times {\Sym}^n (W)$.
Let $p_{{\mathcal{E}}^{\perp}} : {\mathcal{E}}^{\perp} \to {\P}^k$
be the bundle map, and let ${\pi}^{\perp}= p_2^{\perp} \circ i^{\perp}$  be the composition of 
the inclusion $i^{\perp}$ of ${\mathcal{E}}^{\perp}$ into $\P^k \times {\rm Sym}^n (W^*)$
and the second projection $p_2^{\perp}$. See the following diagram.

\begin{tikzpicture}
  \matrix (m) [matrix of math nodes,row sep=3em,column sep=3em,minimum width=2em, ampersand replacement=\&]
  {
     {\mathcal E}^{\perp} \& {\P}^k \times {\rm Sym}^n(W^*) \& {\rm Sym}^n(W^*), \& \pi^{\perp} = p_2^{\perp} \circ i^{\perp}\\
     \ \&  {\P}^k \& \ \\};
  \path[-stealth]
    (m-1-1) 
          edge node [above] {$i^{\perp}$} (m-1-2)
          edge node [above] {$p_{{\mathcal E}^{\perp}}$} (m-2-2)

    (m-1-2) edge node [right] {$p_1^{\perp}$} (m-2-2)
            edge node [above] {$p_2^{\perp}$} (m-1-3);
\end{tikzpicture}

\

Combining the results of the last three lemmas, we obtain 

\begin{proposition} \label{coroflemmas} Using the notation of Lemma \ref{Ind}, we have
$$\Phi ({\bf L}_{\bar{\Lambda}}) ={\pi}^{\perp}_* p_{{\mathcal{E}}^{\perp}}^* {\widetilde L}_{\lambda}$$
\end{proposition}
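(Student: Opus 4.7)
The proof is essentially a three-step combination of the preceding lemmas, applied in the correct order.

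The plan is to start from Lemma \ref{Ind}, which gives ${\bf L}_{\bar{\Lambda}} = {\pi}_* p_{\mathcal{E}}^* {\widetilde L}_{\lambda}$, and factor $\pi$ as $\pi = p_2 \circ i$, so that ${\bf L}_{\bar{\Lambda}} = (p_2)_* \bigl(i_* p_{\mathcal{E}}^* {\widetilde L}_{\lambda}\bigr)$. Applying $\Phi$ then requires commuting the Fourier transform past $(p_2)_*$ and then past $i_*$.

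For the first step, I would apply Lemma \ref{Dir} to the proper map $q : {\P}^k \to \{\rm pt\}$ with vector bundle $F = {\Sym}^n(W)$ over the point; then $q^*F = {\P}^k \times {\Sym}^n(W)$, the total-space map $Q$ is exactly $p_2$, and the dual map $R$ is exactly $p_2^{\perp}$. This yields
\[
\Phi\bigl((p_2)_* (i_* p_{\mathcal{E}}^* {\widetilde L}_{\lambda})\bigr) = (p_2^{\perp})_*\, \Phi_{{\P}^k \times {\Sym}^n(W)}\bigl(i_* p_{\mathcal{E}}^* {\widetilde L}_{\lambda}\bigr),
\]
provided the complex $i_* p_{\mathcal{E}}^* {\widetilde L}_{\lambda}$ is monodromic for the ${\C}^*$-action in the fibers, which is immediate since $p_{\mathcal{E}}^* {\widetilde L}_{\lambda}$ is pulled back along the bundle projection.

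For the second step, I would apply the subbundle lemma (the first of the two cited from \cite{KW}) with $A = {\P}^k$, ambient vector space $V = {\Sym}^n(W)$, subbundle $\mathcal{E}$ of $A \times V$, and $L = {\widetilde L}_{\lambda}$. Its orthogonal in $A \times V^* = {\P}^k \times {\Sym}^n(W^*)$ is by definition ${\mathcal{E}}^{\perp}$, so
\[
\Phi_{{\P}^k \times {\Sym}^n(W)}\bigl(i_* p_{\mathcal{E}}^* {\widetilde L}_{\lambda}\bigr) = i^{\perp}_* p_{{\mathcal{E}}^{\perp}}^* {\widetilde L}_{\lambda}.
\]
Substituting back and using $\pi^{\perp} = p_2^{\perp} \circ i^{\perp}$ gives the desired identity
$\Phi({\bf L}_{\bar{\Lambda}}) = \pi^{\perp}_* p_{{\mathcal{E}}^{\perp}}^* {\widetilde L}_{\lambda}$.

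There is no real obstacle beyond bookkeeping: the only things to verify are that $p_2$ is proper (it is, because ${\P}^k$ is projective, so the second projection from ${\P}^k \times {\Sym}^n(W)$ is proper), that $i$ is a closed embedding (so $i_* = i_!$, matching the conventions in both cited lemmas), and the monodromicity noted above. The mild point to watch is the identification of the dual map $R$ in Lemma \ref{Dir} with $p_2^{\perp}$; this is transparent here because the base is a point, but one must be careful to use the \emph{relative} Fourier transform on ${\P}^k \times {\Sym}^n(W)$ when applying the subbundle lemma, which is precisely what is needed to line up with Lemma \ref{Dir}.
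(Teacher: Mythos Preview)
Your proposal is correct and follows exactly the paper's approach: the paper's proof consists of the single sentence ``Note that we apply Lemma \ref{Dir} to the proper map ${\P}^k \to {\rm point}$,'' together with the surrounding remark that the proposition combines the three preceding lemmas. You have simply spelled out the bookkeeping (factoring $\pi = p_2 \circ i$, identifying $Q = p_2$ and $R = p_2^{\perp}$, then invoking the subbundle lemma for the relative Fourier transform) that the paper leaves implicit.
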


\begin{proof} Note that we apply Lemma \ref{Dir} to the proper map ${\P}^k \to {\rm point}$.
\end{proof}

\begin{lemma} \label{secantlemma}

{\rm (a)} The image of ${\mathcal E}^{\perp}$ under $\pi^{\perp}$ is the cone over ${\Sec^k}$ in $\Sym^n(W^*)$.

{\rm (b)} If $k \leq \frac{n+1}{2}$, the map $\pi^{\perp}$ is an isomorphism restricted to $({\pi}^{\perp})^{-1}(U_k)$ where $U_k$ is the cone (with vertex removed) over ${\Sec^k} - \Sec^{k-1}$ in $\Sym^n(W^*)$.

{\rm (c)} If $k \leq \frac{n+1}{2}$, the map $p_{{\mathcal E}^{\perp}}$ restricted to $({\pi}^{\perp})^{-1}(U_k)$ is a fiber bundle over the configuration space $\Conf_k(X) = \P^k - D_k$.

\end{lemma}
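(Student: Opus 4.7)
The key identification for all three parts uses apolarity. Identifying $\Sym^k(W)$ with constant-coefficient differential operators of order $k$ on $\C[y_0,y_1] \cong \Sym(W^*)$ via $x_i \mapsto \partial_{y_i}$, one recognizes
$$\mathcal{E}^\perp_{[v]} = \ker\bigl(v(\partial) : \Sym^n(W^*) \to \Sym^{n-k}(W^*)\bigr),$$
since $\mathcal{E}_{[v]} = v \cdot \Sym^{n-k}(W)$ by construction. For $v = \prod_{i=1}^k(\beta_i x_0 - \alpha_i x_1)$ with pairwise distinct roots $(\alpha_i:\beta_i) \in \P^1$, each factor $\beta_i \partial_{y_0} - \alpha_i \partial_{y_1}$ annihilates the linear form $\ell_i := \alpha_i y_0 + \beta_i y_1$, so $v(\partial)\ell_i^n = 0$ for every $i$. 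The $\ell_i^n$ are linearly independent (Vandermonde), and a dimension count gives $\dim \mathcal{E}^\perp_{[v]} = (n+1) - (n-k+1) = k$, so $\mathcal{E}^\perp_{[v]}$ equals the span of $\ell_1^n,\ldots,\ell_k^n$ --- the cone over the $k$-secant $(k-1)$-plane through the points $[\ell_i^n] \in X$.

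For (a), the map $\pi^\perp$ is proper: it factors as the closed immersion $i^\perp$ composed with the projection off the projective factor $\P^k$, so its image is Zariski closed. The apolarity identification shows this image contains every cone over a $k$-secant $(k-1)$-plane through $k$ distinct points of $X$, a set dense in the cone over $\Sec^k(X)$; the reverse inclusion for $[v] \in D_k$ follows from continuity of $[v] \mapsto \mathcal{E}^\perp_{[v]}$ together with closedness of $\Sec^k(X)$. For (b), the hypothesis $k \leq (n+1)/2$ is precisely the regime of Sylvester's theorem for binary Waring decompositions: any $\xi \in U_k$ admits a unique degree-$k$ apolar polynomial up to scalar, equivalently $\dim \xi^\perp_k = 1$ (verifiable directly from catalecticant ranks). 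Thus $\pi^\perp$ restricts to a proper bijection $(\pi^\perp)^{-1}(U_k) \to U_k$; since $U_k$ is smooth by hypothesis and the source is open in the smooth total space of $\mathcal{E}^\perp$, a proper bijective morphism between smooth varieties in characteristic zero is an isomorphism.

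For (c), Sylvester uniqueness also forces the image of $p_{\mathcal{E}^\perp}|_{(\pi^\perp)^{-1}(U_k)}$ to avoid $D_k$: a form in $U_k$ cannot have an apolar polynomial with repeated roots. Over the base $\P^k - D_k = \Conf_k(\P^1)$, the bundle $\mathcal{E}^\perp$ has rank $k$, and the removed locus (where $\xi$ slides into $\Sec^{k-1}$) is exactly the union of the $k$ hyperplanes $\{c_i = 0\}$ in the frame $(\ell_1^n,\ldots,\ell_k^n)$. Passing to the \'etale $S_k$-cover $\Conf_k^{\rm ord}(\P^1) \to \Conf_k(\P^1)$ labels the roots and trivializes the pulled-back bundle, so the removed locus becomes the standard coordinate hyperplane arrangement; descent along the $S_k$-quotient then exhibits $(\pi^\perp)^{-1}(U_k)$ as a Zariski-locally trivial fiber bundle over $\Conf_k(\P^1)$ with fiber $(\C^*)^k$. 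The main technical obstacle throughout is the apolarity/Sylvester package: once $\mathcal{E}^\perp_{[v]}$ is identified with $\operatorname{span}\{\ell_i^n\}$ and uniqueness of the degree-$k$ apolar polynomial in the range $k \leq (n+1)/2$ is established, the rest (properness, closed image, local triviality) is formal.
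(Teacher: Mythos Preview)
Your arguments for (a) and (b) are correct and follow the same apolarity approach as the paper, though you supply considerably more detail: the paper simply cites the apolarity lemma from \cite{CS} for (a) and sketches a Vandermonde argument for (b)--(c). Your catalecticant formulation of (b) --- that $\xi \in U_k$ if and only if $\dim \xi^\perp_k = 1$ --- actually handles the osculating fibers as well, and is in that respect more complete than the paper's Vandermonde sketch, which only treats honest secant planes through distinct points. (One minor quibble: ``$U_k$ is smooth by hypothesis'' is circular, since smoothness of $U_k$ is a consequence of (b) rather than an assumption; but the isomorphism still follows, e.g.\ because the inverse $\xi \mapsto ([\ker C_k(\xi)],\xi)$ is visibly regular on the constant-rank locus.)

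There is, however, a genuine error in your argument for (c). The assertion that ``a form in $U_k$ cannot have an apolar polynomial with repeated roots'' is false. For $n \geq 3$ and $k=2$, take $\xi = y_0 y_1^{n-1}$: it lies on the tangent line to $X$ at $[y_1^n]$, hence in $\Sec^2 \smallsetminus X$, so $\xi \in U_2$; but one checks directly that $\xi^\perp_2 = \langle x_0^2 \rangle \subset D_2$. In general the fibers of $\mathcal{E}^\perp$ over $D_k$ are osculating $k$-planes (cf.\ Remark~\ref{eperpisabssec}), and these meet $U_k$ nontrivially, so $p_{\mathcal{E}^\perp}|_{(\pi^\perp)^{-1}(U_k)}$ surjects onto all of $\P^k$ with non-isomorphic fibers over $D_k$ (for $k=2$ one gets $\C \times \C^*$ rather than $(\C^*)^2$). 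The paper's own one-line proof of (c) shares this gap --- its Vandermonde argument addresses only planes through distinct points --- and the statement is best read as asserting a fiber bundle after further restricting to $p_{\mathcal{E}^\perp}^{-1}(\P^k - D_k)$, which is all that is needed to define $\mathcal{R}_\lambda$ on a dense open before IC-extending. Your \'etale-cover trivialization argument is correct on that locus.
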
 

\begin{proof} 

To prove (a) we notice that the fiber of ${\mathcal E}^{\perp}$
over $q \in (\P^1)^{(k)}$ can be thought of as the space of homogeneous polynomials $p$ of degree $n$ 
which are annihilated by $q(\partial_x, \partial_y)$. On the other hand, Lemma 4 in \cite{CS} 
(see also \cite{L}, Section 3.5.3) says that a point $p \in \Sym^n(W^*)$ is in $\Sec^k$ if and only there exists
a polynomial $q(\partial_x,\partial_y)$ which annihilates $p(x,y)$.
 (Note that Lemma 4 is stated in \cite{CS} only for $k \leq \frac{n}{2}$ 
but for $k=\frac{n+1}{2}$, if it is an integer, the statement of Lemma 4 in \cite{CS} holds trivially).

Parts (b) and (c) follow from the fact that (for $k \leq \frac{n+1}{2}$) two different $k$-secant planes 
of $X$ meet at points of the $(k-1)$-th 
secant variety (i.e. there is only one $k$-secant plane that passes through a point of 
$\Sec^k - \Sec^{k-1}$), which follows by calculating a Vandermonde determinant.
A more general version of this argument appears in the proof of Lemma \ref{secantlemma2}.

\end{proof}

\begin{remark} The fact that ${\rm Sec}^k  - {\rm Sec}^{k-1}$ 
is smooth (which follows from part (b) of Lemma \ref{secantlemma}) is 
a special case of \cite{Bertram},  Corollary 1.6.
\end{remark} 

\begin{remark} \label{eperpisabssec}
By construction the fiber of ${\mathcal E}^{\perp}$ over a $k$-tuple
of points in $\P^k=X^{(k)}$ is the $k$-plane in $\Sym^n(W^*)$ passing
 through these $k$ points and the point $0$ (if the points in the $k$-tuple
are not distinct, the plane becomes an osculating plane to $X$). Hence the
projectivization of the bundle ${\mathcal E}^{\perp}$ is the $k$-th "secant bundle"
of the curve $X$ described in \cite{Bertram}, Section 1.
\end{remark}

Let $\rho$ be a partition of $k$. The fiber bundle $p_{{\mathcal E}^{\perp}} \circ (\pi^{\perp})^{-1}: U_k \to \Conf_k(X)$ 
yields a map of groups $\pi_1(U_k) \to \pi_1(\Conf_k(X))=B_k(\P^1) \to S_k$. 
This allows us to make the following

\begin{definition}\label{Rdefn}
The local system ${\mathcal R}_{\rho}$ on $U_k$ is defined by lifting the representation $\rho$ via the above map
$\pi_1(U_k) \to S_k$. We define a perverse sheaf ${\bf R}_{\rho}$ as the intermediate extension of ${\mathcal R}_{\rho}$
to $\Sym^n(W^*)$. 
\end{definition}

Now we are ready to prove Theorem \ref{main}:

\begin{proof}[Proof of Theorem \ref{main}]

We prove the statement by decreasing induction on the length of the first row of $\Lambda$
(or, equivalently, by increasing induction on $|\lambda|$).
The base of induction is furnished by Example \ref{regrep}, where the length of the first row
of $\Lambda$ is $|\Lambda|-1$. 

Proposition \ref{coroflemmas} shows that 
$\Phi ({\bf L}_{\bar{\Lambda}}) ={\pi}^{\perp}_* p_{{\mathcal{E}}^{\perp}}^* {\widetilde L}_{\lambda}$.
By Pieri's rule the irreducible decomposition of the induced representation 
$\bar{\Lambda}={\rm Ind}_{S_k \times S_{n-k}}^{S_n} (\lambda \boxtimes I)$ of $S_n$
has exactly one irreducible representation with the first row of length at most $n-k$, 
 this representation is $\Lambda$, and it has multiplicity $1$. Hence 
${\bf L}_{\bar{\Lambda}} ={\bf L}_{\Lambda} \oplus \bigoplus_{\Omega, |\omega|< k} a_{\Omega} {\bf L}_{\Omega}$, where $a_{\Omega}$ is the multiplicity of $\Omega$ in $\bar{\Lambda}$.
The inductive hypothesis implies that for all $\Omega$ with $|\omega|<k$ we have 
$\Phi({\bf L}_{\Omega})={\bf R}_{\omega}$. Hence 
$\Phi ({\bf L}_{\bar{\Lambda}}) =
\Phi({\bf L}_{\Lambda}) \oplus \bigoplus_{\Omega, |\omega|< k} a_{\Omega} {\bf R}_{\omega}$.
Since by definition ${\bf R}_{\omega}$ is an irreducible perverse sheaf supported on the cone over  
$\Sec^{|\omega|} \subset \Sec^{k-1}$, the restriction of $\Phi({\bf L}_{\Lambda})$ to $U_k$
coincides with the restriction to $U_k$ of 
$\Phi ({\bf L}_{\bar{\Lambda}}) ={\pi}^{\perp}_* p_{{\mathcal{E}}^{\perp}}^* {\widetilde L}_{\lambda}$,
which by definition is ${\mathcal R}_{\lambda}$. Note that by Lemma \ref{secantlemma} (a) 
${\pi}^{\perp}_* p_{{\mathcal{E}}^{\perp}}^* {\widetilde L}_{\lambda}$ (and hence $\Phi({\bf L}_{\Lambda})$)
is supported on $\Sec^k$. Since Fourier transform of an irreducible perverse sheaf is an irreducible perverse
sheaf, by the structure theorem of \cite{BBD} $\Phi({\bf L}_{\Lambda})$ must be the intermediate 
extension of ${\mathcal R}_{\lambda}$, that is ${\bf R}_{\lambda}$.

\end{proof}

The following is a corollary of the above proof:

\begin{corollary}
Even if  the condition $|\lambda|\leq \frac{|\Lambda|+1}{2}$ does not hold,
$\Phi ({\bf L}_{\Lambda})$ is a direct summand in 
the semisimple 
perverse sheaf
${\pi}^{\perp}_* p_{{\mathcal{E}}^{\perp}}^* {\widetilde L}_{\lambda}$.
\end{corollary}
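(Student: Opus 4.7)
The plan is to reuse the decomposition already set up in the proof of Theorem \ref{main}, simply stopping before the inductive step (which is the only place the long-first-row assumption was invoked).

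First, I would apply Pieri's rule to decompose
$$\bar{\Lambda} = \mathrm{Ind}_{S_k \times S_{n-k}}^{S_n}(\lambda \boxtimes I) = \Lambda \oplus \bigoplus_{\Omega \neq \Lambda} a_{\Omega}\, \Omega$$
into $S_n$-irreducibles. As noted in the proof of Theorem \ref{main}, the partition $\Lambda = (n-k, \lambda)$ obtained by placing a row of length $n-k$ on top of $\lambda$ always appears with multiplicity one, since stacking a new first row of length $n-k$ onto $\lambda$ is trivially a horizontal-strip addition; this holds regardless of whether $n-k$ is large. Because $\rho \mapsto {\bf L}_{\rho}$ is additive in $\rho$, one obtains
$${\bf L}_{\bar{\Lambda}} = {\bf L}_{\Lambda} \oplus \bigoplus_{\Omega \neq \Lambda} a_{\Omega}\, {\bf L}_{\Omega},$$
which is a decomposition of ${\bf L}_{\bar{\Lambda}}$ as a direct sum of IC sheaves.

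Next I would apply $\Phi$. Since $\Phi$ is exact on perverse sheaves and sends irreducible perverse sheaves to irreducible ones, the image
$$\Phi({\bf L}_{\bar{\Lambda}}) = \Phi({\bf L}_{\Lambda}) \oplus \bigoplus_{\Omega \neq \Lambda} a_{\Omega}\, \Phi({\bf L}_{\Omega})$$
is again a direct sum of irreducible perverse sheaves, hence semisimple. Proposition \ref{coroflemmas} — whose proof relies only on Lemma \ref{Ind} and the two general Fourier-transform lemmas, and in particular uses nothing about the length of the first row — identifies this semisimple perverse sheaf with ${\pi}^{\perp}_* p_{{\mathcal{E}}^{\perp}}^*\, {\widetilde L}_{\lambda}$. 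Picking off the summand indexed by $\Lambda$ then yields the corollary.

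There is really no obstacle here; the corollary is simply the observation that everything through Proposition \ref{coroflemmas} is hypothesis-free, and that exactness of $\Phi$ together with multiplicity one of $\Lambda$ in $\bar{\Lambda}$ guarantees $\Phi({\bf L}_{\Lambda})$ appears as a summand. The long-first-row assumption in Theorem \ref{main} was needed only to \emph{identify} the other summands inductively as specific ${\bf R}_{\omega}$'s and thus to conclude that the complementary summand equals ${\bf R}_{\lambda}$ on the nose; the existence of the summand itself requires nothing of the sort.
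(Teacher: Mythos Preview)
Your argument is correct and is essentially the paper's own (implicit) proof: the corollary is stated immediately after Theorem \ref{main} with no separate proof, precisely because the Pieri decomposition of $\bar{\Lambda}$ and Proposition \ref{coroflemmas} already exhibit $\Phi({\bf L}_{\Lambda})$ as a direct summand of ${\pi}^{\perp}_* p_{{\mathcal{E}}^{\perp}}^* {\widetilde L}_{\lambda}$, with semisimplicity coming from the fact that $\Phi$ of a semisimple perverse sheaf is semisimple. The only cosmetic difference is that the paper records the sharper fact that every other constituent $\Omega$ of $\bar{\Lambda}$ satisfies $|\omega|<k$, whereas you (correctly) note that mere appearance of $\Lambda$ with multiplicity one suffices for the summand claim.
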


\section{Main theorem for any curve}

In this section we prove Theorem \ref{abeljacobi} which generalizes Theorem \ref{main},
replacing the rational normal curve $X$ by an arbitrary 
smooth projective curve $C$ of genus $g$ embedded into $\P^n$. We start with a lemma:

\begin{lemma} \label{separatepoints}
 For a very ample line bundle $L$ on a smooth projective curve $C$, consider the embedding of $C$ 
into the projective space ${\P}(H^0(C,L)^*)$. Then the following two conditions are equivalent:

(a) 
Two $k$-secant planes of $C$ one of which contains the points $x_1,...,x_k \in C$ and
the other one -- $y_1,...,y_k \in C$ (such that $x_1,...,x_k,y_1,...,y_k$ are all distinct)
do not intersect;

(b) $h^0(L)-h^0(L(-D))=2k$ for any effective divisor $D$ of degree $2k$
(this condition is known as "$D$ separates $2k$ points").

\end{lemma}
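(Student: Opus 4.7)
The plan is to translate both (a) and (b) into statements about the dimensions of section spaces of $L$ twisted by effective divisors, and to read off the equivalence from a short inclusion--exclusion identity.

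The dictionary is as follows. Under the embedding $C\hookrightarrow\P(H^0(C,L)^*)$, a point $p\in C$ corresponds to the line in $H^0(L)^*$ annihilated by the hyperplane $H^0(L(-p))\subset H^0(L)$; hence for an effective divisor $D$ of degree $m$ on $C$, the span in $H^0(L)^*$ of the corresponding projective points---i.e.\ the secant plane $K_D$---is the annihilator of $H^0(L(-D))$. Setting $r(D):=h^0(L)-h^0(L(-D))$, this gives $\dim K_D=r(D)\le m$.

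For $D_x,D_y$ effective of degree $k$ with disjoint reduced supports, the identity $H^0(L(-D_x))\cap H^0(L(-D_y))=H^0(L(-D_x-D_y))$ inside $H^0(L)$ (immediate from disjoint supports), combined with $K_{D_x}\cap K_{D_y}=\mathrm{ann}(H^0(L(-D_x))+H^0(L(-D_y)))$ and inclusion--exclusion, yields
\[
\dim\bigl(K_{D_x}\cap K_{D_y}\bigr)=r(D_x)+r(D_y)-r(D_x+D_y).
\]
The bounds $r(D_x),r(D_y)\le k$ and $r(D_x+D_y)\le 2k$ force this to vanish if and only if $r(D_x+D_y)=2k$ (in which case $r(D_x)=r(D_y)=k$ automatically). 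Hence disjointness of $K_{D_x}$ and $K_{D_y}$ is equivalent to $r(D_x+D_y)=2k$. This yields (b)$\Rightarrow$(a) immediately, and likewise (a)$\Rightarrow$(b) for every reduced effective $D$ of degree $2k$: just split $D$ arbitrarily into disjoint halves of degree $k$.

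The main obstacle is extending (a)$\Rightarrow$(b) to non-reduced $D$ whose support is too concentrated to admit such a splitting (the extreme case being $D=2k\cdot p$). Here the combinatorial identity above does not apply, and I expect to need genuine geometric input. The natural route is the cohomological reformulation: in the setting of the paper ($\deg L$ large enough that $h^1(L)=0$), the long exact sequence for $0\to L(-D)\to L\to L|_D\to 0$ identifies the condition $r(D)=2k$ with the vanishing $h^1(L(-D))=0$, which is an open condition on $D\in C^{(2k)}$. Having established it on the dense open locus of reduced divisors, one combines irreducibility of $C^{(2k)}$ with a direct check for osculating divisors (via a limit/deformation argument, or equivalently a dimension count in $\mathrm{Pic}^{2k}(C)$ against the appropriate Brill--Noether-type locus where $h^0(K_C\otimes L^{-1}(D))$ jumps) to rule out a failure locus concentrated entirely on the non-reduced boundary, completing the proof.
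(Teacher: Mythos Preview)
Your argument for the core equivalence is essentially the paper's, written out in more detail. The paper identifies the span of an effective divisor $D$ with $\P(\mathrm{Ker}(H^0(L)^*\to H^0(L(-D))^*))$ and then simply observes that the two $k$-secant planes through $\{x_i\}$ and $\{y_j\}$ are disjoint if and only if the span of all $2k$ points has projective dimension $2k-1$, i.e.\ $r(D_x+D_y)=2k$. Your inclusion--exclusion identity $\dim(K_{D_x}\cap K_{D_y})=r(D_x)+r(D_y)-r(D_x+D_y)$ is a slightly more explicit way of saying the same thing; the paper collapses it into one sentence.

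You are right that the passage from ``(b) for all reduced $D$'' to ``(b) for all effective $D$'' is not addressed---and the paper's proof does not address it either. The paper only ever uses the implication (b)$\Rightarrow$(a) (in Lemma~\ref{secantlemma2}, where (b) is verified directly for \emph{all} effective $D$ via a degree bound), so this omission is harmless for the application. Your instinct to flag it is sound, but your proposed fix does not close the gap: openness of $\{D:r(D)=2k\}$ together with density of the reduced locus does \emph{not} force the condition to hold everywhere---the failure locus could sit entirely inside the non-reduced boundary. The ``direct check for osculating divisors'' and the Brill--Noether dimension count you sketch would have to rule out, for instance, the possibility that some complete linear system $|M|\subset C^{(2k)}$ with $h^1(L\otimes M^{-1})>h^1(L)$ has a base point of multiplicity at least $2$ (forcing every member to be non-reduced). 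That is a genuine extra argument, and absent further hypotheses on $L$ it is not obvious it always goes through. In short: for what the paper needs, your proof and the paper's coincide; the residual worry you raise is legitimate but lies outside what either proof actually establishes.
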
 

\begin{proof}

First note that the projectivized span of all points of an effective divisor $D$ is given by
 $\P({\rm Ker} (H^0(L)^* \to H^0(L(-D))^*))$. If $D$ has degree $2k$, the long
exact sequence of cohomology associated to the short exact sequence $0 \to L(-D) \to L \to L_D \to 0$
shows that the points of $D$ span a projectivized linear space of expected dimension $2k-1$ 
if and only if $h^0(L)-h^0(L(-D))=2k$. Finally, two $k$-secant planes in (a) intersect if and 
only if the projectivized linear space spanned by the points of $D=\{x_1,...,x_k,y_1,...,y_k\}$
has dimension less than $2k-1$. 
\end{proof}

To prove the Theorem \ref{abeljacobi}, we generalize the constructions of the previous section.
Recall that we have a smooth projective curve $C$ of genus $g$ and a line bundle $M$ of degree $n$ on $C$.
We construct a vector bundle ${\mathcal E}_C$ over $C^{(k)}$ (analogous to the bundle $\mathcal E$ above)
as a subbundle of the trivial bundle $C^{(k)} \times H^0(C,M)$  by requiring the fiber of $\mathcal E_C$ over a point $D \in C^{(k)}$ to be given by ${\rm Image}(H^0(C,M(-D)) \to H^0(C,M))$. (Note that ${\mathcal E}_C$ is a vector bundle for
$k \leq n+1 -2g$ since then $H^1(C,M(-D))=0$ by Serre duality.) See the diagram below.

\begin{tikzpicture}
  \matrix (m) [matrix of math nodes,row sep=3em,column sep=3em,minimum width=2em, ampersand replacement=\&]
  {
     {\mathcal E}_C \& C^{(k)} \times H^0(C,M) \& H^0(C,M), \& \pi_C = p_{2,C} \circ i_C\\
     \ \&  C^{(k)} \& \ \\};
  \path[-stealth]
    (m-1-1) 
          edge node [above] {$i_C$} (m-1-2)
          edge node [above] {$p_{{\mathcal E}_C}$} (m-2-2)
    
    (m-1-2) edge node [right] {$p_{1,C}$} (m-2-2)
            edge node [above] {$p_{2,C}$} (m-1-3);
\end{tikzpicture}

Recall that for a representation $\rho$ of $S_r$ we introduced the local system ${\mathcal L}_{\rho}^{C, \rm big}$
on the open subvariety $C^{(r)}_{\rm dist}$ of the symmetric power $C^{(r)}$. We denote its intermediate 
extension to $C^{(r)}$ by  $ L_{\rho}^{C,\rm big}$. We also denote the intermediate extension of
the local system ${\mathcal L}^C_{\rho}$ to $\P(H^0(C,M))$ by ${\widetilde L}^C_{\rho}$ and 
the intermediate extension to $H^0(C,M)$ of the lift of ${\mathcal L}^C_{\rho}$ to the cone by ${\bf L}^C_{\rho}$. 
The following generalizes Lemma \ref{Ind}:

\begin{lemma} \label{Ind2}
Let $\lambda$ be a representation of $S_k$ and let $I$ denote the trivial representation of $S_{n-k}$. 
Let $\bar{\Lambda}={\rm Ind}^{S_n}_{S_k \times S_{n-k}} (\lambda \boxtimes I)$.
Then 
${\bf L}^C_{\bar{\Lambda}} = (\pi_C)_*  (p_{{\mathcal E}_C})^* L_{\lambda}^{C, \rm big}$.
\end{lemma}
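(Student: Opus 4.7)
The strategy is to replay the proof of Lemma \ref{Ind} globally on $C^{(n)}$, then restrict to the Abel-Jacobi fiber over $M$ before lifting to the cone. The only genuinely new geometric ingredient is an identification of the fiber of divisor addition over the linear system $|M|$ with the projectivization of $\mathcal{E}_C$. Concretely, writing $\phi^C_d:C^d\to C^{(d)}$ for the (finite) symmetrization, $(\phi^C_d)_*\underline{\C}[d]$ is a semisimple perverse sheaf and $L^{C,\mathrm{big}}_\rho=\Hom_{S_d}(\rho,(\phi^C_d)_*\underline{\C}[d])$. Using the factorization $\phi^C_n=m^C\circ(\phi^C_k\times\phi^C_{n-k})$, where $m^C:C^{(k)}\times C^{(n-k)}\to C^{(n)}$ is divisor addition, and Frobenius reciprocity exactly as in the proof of Lemma \ref{Ind}, one gets
\[ m^C_*\bigl(L^{C,\mathrm{big}}_\mu\boxtimes L^{C,\mathrm{big}}_\nu\bigr)=L^{C,\mathrm{big}}_{\mathrm{Ind}^{S_n}_{S_k\times S_{n-k}}(\mu\boxtimes\nu)}, \]
and specializing to $\mu=\lambda$, $\nu=I$ (so that $L^{C,\mathrm{big}}_I=\underline{\C}[n-k]$) yields the global identity $m^C_*\bigl(L^{C,\mathrm{big}}_\lambda\boxtimes\underline{\C}[n-k]\bigr)=L^{C,\mathrm{big}}_{\bar{\Lambda}}$ on $C^{(n)}$.

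Second, I would identify $(m^C)^{-1}(|M|)$ with $\P(\mathcal{E}_C)$. A pair $(D,D')\in C^{(k)}\times C^{(n-k)}$ lies in $(m^C)^{-1}(|M|)$ exactly when $D+D'\sim M$, i.e.\ when $D'$ is a point of $\P(H^0(C,M(-D)))$; this is precisely the projectivized fiber of $\mathcal{E}_C$ at $D$. Under the identification, the restricted map $\P(\mathcal{E}_C)\to|M|$ is the projectivization of $\pi_C$, and the first projection restricts to the bundle map $p_{\mathcal{E}_C}$. Applying proper base change to the global identity along $|M|\hookrightarrow C^{(n)}$ (the map $m^C$ is proper) then yields
\[ \widetilde{L}^C_{\bar{\Lambda}}=(m^C|_{\P(\mathcal{E}_C)})_*\,p_{\P(\mathcal{E}_C)}^{*}\,L^{C,\mathrm{big}}_\lambda \]
on $\P(H^0(C,M))$.

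Third, I would lift to the cone. The right-hand side $(\pi_C)_*\,p_{\mathcal{E}_C}^{*}\,L^{C,\mathrm{big}}_\lambda$ is $\C^*$-equivariant in the fibers of $\mathcal{E}_C$ and its $\C^*$-descent to $\P(H^0(C,M))$ is exactly the Step-2 identity; since ${\bf L}^C_{\bar{\Lambda}}$ is by definition the $\C^*$-monodromic IC extension across the vertex of that same descent, the two perverse sheaves must coincide. The main obstacle I anticipate is the fiber identification, where one has to check that $\mathcal{E}_C$ is a genuine vector bundle (requiring $k\leq n+1-2g$ so that $h^1(M(-D))=0$ by Serre duality, making $H^0(C,M(-D))\hookrightarrow H^0(C,M)$ of constant rank) and that the set-theoretic description of $(m^C)^{-1}(|M|)$ upgrades to a scheme-theoretic isomorphism compatible with both projections. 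The rest is formal: a direct transcription of the $\P^1$ argument combined with proper base change and standard $\C^*$-monodromicity.
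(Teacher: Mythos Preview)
Your proposal is correct and follows essentially the same route as the paper: first establish the global identity $m^C_*(L^{C,\mathrm{big}}_\lambda\boxtimes\underline{\C})=L^{C,\mathrm{big}}_{\bar\Lambda}$ on $C^{(n)}$ by the argument of Lemma~\ref{Ind}, then identify $(m^C)^{-1}(|M|)$ with $\P(\mathcal{E}_C)$ and apply proper base change along the Abel--Jacobi fiber $|M|\hookrightarrow C^{(n)}$ to obtain the projective statement, and finally pass to the cone. The paper records this via a single commutative diagram and the chain $\widetilde L^C_{\bar\Lambda}=A_M^*L^{C,\mathrm{big}}_{\bar\Lambda}=(\pi_C')_*(i_C')^*(L^{C,\mathrm{big}}_\lambda\boxtimes\underline{\C})=(\pi_C')_*(p'_{\mathcal{E}_C})^*L^{C,\mathrm{big}}_\lambda$; your Step~2 identification is exactly the content of the map $i_C'$ there.
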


\begin{proof}
The argument parallels the proof of Lemma \ref{Ind} with several adjustments. Namely, 
in place of the diagram in the proof of Lemma \ref{Ind} we use the diagram below.
In this diagram, $\phi_r^C: C^r \to C^{(r)}$ is the natural map for any $r$, $p_1^{\prime}$
is the first projection, and $A_M$ is the fiber of the Abel-Jacobi map over $M$. 
In addition to these, $i_C^{\prime}$ is
the map derived from $i_C$ by embedding the fibers $H^0(C,M(-D))$ of ${\mathcal E}_C$
 into $C^{(n-k)}$ as fibers ot the Abel-Jacobi map, and $p^{\prime}_{{\mathcal E}_C}$ and $\pi^{\prime}_C$
are the analogs of $p_{{\mathcal E}_C}$ and $\pi_C$ for the projectivization $\P({\mathcal E}_C)$ of the bundle 
${\mathcal E}_C$.

\begin{tikzpicture} \label{factoring}
  \matrix (m) [matrix of math nodes,row sep=3em,column sep=6em,minimum width=2em, ampersand replacement=\&]
  {
     \         \&                C^{(k)}             \&       \                            \\
     C^n   \& C^{(k)} \times C^{(n-k)} \&  \P({\mathcal E}_C)    \\
     \         \&                C^{(n)}             \&   \P(H^0(C,M))           \\};
  \path[-stealth]
    (m-2-1) 
          edge node [above] {${\phi}_{k}^{C} \times {\phi}_{n-k}^C$} (m-2-2)
          edge node [above] {$\phi_n^C$} (m-3-2)

    (m-2-2) edge node [right] {$m_C$} (m-3-2)
                 edge node [right] {$p_1^{\prime}$} (m-1-2)

    (m-2-3) edge node [above] {$p_{{\mathcal E}_C}^{\prime}$} (m-1-2)
                 edge node [below] {$i_C^{\prime}$}                            (m-2-2)
                 edge node [right] {$\pi_C^{\prime}$}                           (m-3-3)

    (m-3-3) edge node [above] {$A_M$} (m-3-2);
\end{tikzpicture}

The argument of Lemma \ref{Ind} gives 
$L_{\bar \Lambda}^{C, \rm big} = (m_C)_* (L_{\lambda}^{C, \rm big} \boxtimes \underline{\C})$. 
Hence
$${\widetilde L}_{\bar \Lambda}^{C}=A_M^* L_{\bar \Lambda}^{C, \rm big}
= (\pi_C^{\prime})_* (i_C^{\prime})^*(L_{\lambda}^{C, \rm big} \boxtimes \underline{\C})
=(\pi_C^{\prime})_*  (p^{\prime}_{{\mathcal E}_C})^*  L_{\lambda}^{C, \rm big}$$
From this the statement of the Lemma follows. 
(Note that we used proper base change in the second equality above).

\end{proof}

We also have to prove a generalization of Lemma \ref{secantlemma}:

\begin{lemma} \label{secantlemma2}

{\rm (a)} The image of ${\mathcal E}_C^{\perp}$ under $\pi_C^{\perp}$ is the cone over ${\Sec^k}(C)$ in $H^0(C,M)^*$.

{\rm (b)} If $k \leq \lfloor \frac{n+1}{2} \rfloor-g$, the map $\pi_C^{\perp}$ is an isomorphism restricted to $({\pi}_C^{\perp})^{-1}(U^C_k)$ where $U^C_k$ is the cone (with vertex removed) over ${\Sec^k}(C) - \Sec^{k-1}(C)$ in $H^0(C,M)^*$.

{\rm (c)} If $k \leq \lfloor \frac{n+1}{2} \rfloor-g$, the map $p_{{\mathcal E}_C^{\perp}}$ restricted to $({\pi}_C^{\perp})^{-1}(U^C_k)$ is a fiber bundle over $C^{(k)}_{\rm dist}$.
\end{lemma}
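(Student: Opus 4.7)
The plan is to adapt the proof of Lemma \ref{secantlemma} to general $C$ by replacing the ad hoc Vandermonde computation with a Riemann--Roch dimension count based on Lemma \ref{separatepoints}. I first identify the fiber of ${\mathcal E}_C^{\perp}$ over $D \in C^{(k)}$ intrinsically with the annihilator $\Pi_D := H^0(M(-D))^\perp \subset H^0(M)^*$; when $D$ is reduced, this is the affine cone over the $k$-secant plane spanned by the evaluation functionals at the points of $D$. The map $\pi_C^\perp$ is proper (since $C^{(k)}$ is proper and $i_C^\perp$ is a closed immersion), so its image is closed; by density of $C^{(k)}_{\rm dist}$ in $C^{(k)}$, this image equals the closure of the union of affine cones over reduced $k$-secant planes, that is, the cone over $\Sec^k(C)$. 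This proves part (a).

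The core step for (b) is the following dimension computation. Under the hypothesis $k \leq \lfloor (n+1)/2 \rfloor - g$, any effective divisor $E'$ on $C$ with $\deg E' \leq 2k$ satisfies $\deg M(-E') \geq 2g-1$, so $M(-E')$ is non-special and $h^0(M(-E')) = h^0(M) - \deg E'$ by Riemann--Roch --- this is the ``$E'$ separates $\deg E'$ points'' input of Lemma \ref{separatepoints}. For distinct $D_1, D_2 \in C^{(k)}$, let $E = \gcd(D_1, D_2)$ of degree $j < k$ and $F = D_1 \vee D_2$ of degree $2k-j$; note $H^0(M(-D_1)) \cap H^0(M(-D_2)) = H^0(M(-F))$. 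Inclusion--exclusion then gives
$$\dim\bigl(H^0(M(-D_1)) + H^0(M(-D_2))\bigr) = 2(h^0(M)-k) - (h^0(M) - (2k-j)) = h^0(M)-j,$$
and since both summands lie in $H^0(M(-E))$, which also has dimension $h^0(M)-j$, the containment is an equality. Taking annihilators yields $\Pi_{D_1} \cap \Pi_{D_2} = \Pi_E$, and since $\deg E = j < k$ this intersection lies in the cone over $\Sec^j(C) \subseteq \Sec^{k-1}(C)$, hence is disjoint from $U_k^C$. Combined with part (a), this shows $\pi_C^\perp$ is a proper set-theoretic bijection from $(\pi_C^\perp)^{-1}(U_k^C)$ onto $U_k^C$; the source is smooth (an open subset of the vector bundle ${\mathcal E}_C^\perp$), and a proper bijective morphism onto an irreducible target forces it to be an isomorphism (so $U_k^C$ is itself smooth, consistent with Bertram's theorem), proving (b).

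Part (c) is then formal: $p_{{\mathcal E}_C^\perp}: {\mathcal E}_C^\perp \to C^{(k)}$ is a rank-$k$ vector bundle, and intersecting its restriction to $C^{(k)}_{\rm dist}$ with the open locus $(\pi_C^\perp)^{-1}(U_k^C)$ produces a Zariski-locally trivial fiber bundle over $C^{(k)}_{\rm dist}$, with typical fiber $\C^k$ minus the $k$ coordinate hyperplanes corresponding to the $(k-1)$-subdivisors of the base divisor.

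The main obstacle is the dimension identity $H^0(M(-D_1)) + H^0(M(-D_2)) = H^0(M(-\gcd(D_1,D_2)))$ in the key step, which must accommodate arbitrary (possibly non-reduced and partially overlapping) divisors $D_1, D_2 \in C^{(k)}$; the hypothesis $k \leq \lfloor(n+1)/2\rfloor - g$ is precisely what guarantees that all three relevant twists $M(-E)$, $M(-D_i)$, $M(-D_1 \vee D_2)$ are non-special, so that Riemann--Roch applies uniformly across the cases.
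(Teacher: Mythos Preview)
Your argument follows the same route as the paper's: use Riemann--Roch to verify the ``separation of $2k$ points'' condition and deduce that distinct $k$-secant planes can meet only inside the cone over $\Sec^{k-1}(C)$. Your $\gcd$/join computation $\Pi_{D_1}\cap\Pi_{D_2}=\Pi_{\gcd(D_1,D_2)}$ is in fact more thorough than the paper's, which simply invokes Lemma~\ref{separatepoints} and so literally treats only the case where $D_1,D_2$ are reduced with disjoint support; your version handles arbitrary $D_1\neq D_2\in C^{(k)}$, which is what injectivity of $\pi_C^\perp$ over $U_k^C$ actually requires.

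One step needs tightening: the claim that ``a proper bijective morphism from a smooth source onto an irreducible target is an isomorphism'' is false as stated (the normalization $\mathbb{A}^1\to\{y^2=x^3\}$ is proper, bijective, with smooth source, yet not an isomorphism). What your argument yields is that $(\pi_C^\perp)^{-1}(U_k^C)\to U_k^C$ is finite and birational, hence the normalization map; concluding it is an isomorphism requires knowing $U_k^C$ is normal. You already gesture at Bertram's smoothness theorem for $\Sec^k(C)-\Sec^{k-1}(C)$; citing it here (or checking directly that $d\pi_C^\perp$ is injective on this locus, which follows from the same dimension count carried out infinitesimally) closes the gap. The paper's own proof is equally informal at this point, simply asserting that parts (b) and (c) ``follow'' once the non-intersection condition of Lemma~\ref{separatepoints}(a) is verified.
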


\begin{proof} To prove (a), we notice that the fiber of ${\mathcal E}_C$ over $D \in C^{(k)}$ is given by 
${\rm Image}(H^0(M(-D)) \to H^0(M))$, which is othogonal to the cone over the $k$-secant plane 
passing through the points of $D$ given by ${\rm Ker} (H^0(M)^* \to H^0(M(-D))^*)$. 

Parts (b) and (c) will follow if we show that the condition of part (a) of Lemma \ref{separatepoints} holds.
By Lemma \ref{separatepoints} it is enough to show that part (b) of Lemma \ref{separatepoints} holds for
$k=\lfloor \frac{n+1}{2} \rfloor-g$. That is, we would like to show that $h^0(C,M)=h^0(C,M(-D))+2k$ holds for 
any effective $D$ of degree $2k$.

By Riemann-Roch we have $h^0(C,M) - h^1(C,M)=n+1-g$ and $h^0(C,M(-D)) - h^1(C,M(-D))=n-2k+1-g$
for an effective $D$ with $|D|=2k$. We have $h^1(M)=h^0(K \otimes M^{-1})$. We also have 
${\rm deg} K =2g-2$ so ${\rm deg} (K \otimes M^{-1})=2g-2-n<0$ because we assumed that $n>2g-2$.
Hence $h^1(C,M)=0$ and $h^0(C,M)=n+1-g$. Subtracting, we get 
$h^0(C,M)-h^0(C,M(-D))=h^1(C,M(-D)) + 2k$. So to ensure that the condition of Lemma \ref{separatepoints} $(a)$
holds it is enough to have $h^1( M(-D))=0$ for all $D$ of degree $2k$. In other words, it is enough to
make sure that $h^0(K \otimes M^{-1}(D))=0$ for all $D$ of degree $2k$. For this it is enough to have
${\rm deg}(K \otimes M^{-1}(D)) =0$, that is $2g-2-n+2k<0$. The last inequality holds for 
$k=\lfloor \frac{n+1}{2} \rfloor-g$.
\end{proof}

Using Lemmas \ref{Ind2} and \ref{secantlemma2} in place of Lemmas \ref{Ind} and \ref{secantlemma}, the proof of Theorem \ref{abeljacobi}
 is identical to the proof of Theorem \ref{main}.

\section{Relation with M. Brion's result}

In this section $\otimes$ stands for the left derived tensor product of objects of $D^b_c(Y)$, where $Y$ is an algebraic variety.
The following lemma will be used in this section:

\begin{lemma} \label{somesheafisperverse} Let $f: Y \to Z$ be a finite morphism with $Y$, $Z$ smooth varieties of 
dimension $n$. Then $f_* (\underline{\C}[n]) \otimes f_* (\underline{\C}[n])$ is a perverse sheaf on $Z$.

\end{lemma}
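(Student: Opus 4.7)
The plan is to identify the derived tensor product with a single pushforward from the fiber product $Y \times_Z Y$, and then to exploit the local complete intersection structure of this fiber product.

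First, use base change to rewrite the tensor product. Consider the Cartesian square whose left vertical is the natural map $g: Y \times_Z Y \to Z$, right vertical is $f \times f: Y \times Y \to Z \times Z$, top horizontal is the inclusion $\iota: Y \times_Z Y \hookrightarrow Y \times Y$, and bottom horizontal is the diagonal $\Delta: Z \hookrightarrow Z \times Z$. Writing $f_*(\underline{\C}[n]) \otimes f_*(\underline{\C}[n]) = \Delta^*(f_*(\underline{\C}[n]) \boxtimes f_*(\underline{\C}[n])) = \Delta^*((f \times f)_*(\underline{\C}_{Y \times Y}[2n]))$ and applying proper base change (valid since $f \times f$ is finite, hence proper), we obtain an identification with $g_*(\underline{\C}_{Y \times_Z Y}[\ast])$ for a shift that the convention on $\otimes$ will fix.

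Second, observe that $Y \times_Z Y$ is a local complete intersection of pure dimension $n$. Indeed, since $Z$ is smooth of dimension $n$, the diagonal $\Delta$ is a regular closed embedding of codimension $n$, and its base change $\iota: Y \times_Z Y \hookrightarrow Y \times Y$ is therefore also a regular closed embedding of codimension $n$ into the smooth $2n$-dimensional variety $Y \times Y$. Thus $Y \times_Z Y$ is Cohen--Macaulay of pure dimension $n$, and the shifted constant sheaf $\underline{\C}_{Y \times_Z Y}[n]$ is perverse on $Y \times_Z Y$ (the bound in ${}^p D^{\leq 0}$ is immediate from the dimension, and the bound in ${}^p D^{\geq 0}$ follows from Cohen--Macaulayness, equivalently from the dualizing complex being concentrated in a single degree).

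Finally, since $g$ is finite as a base change of the finite morphism $f$, the functor $g_*$ is $t$-exact for the perverse $t$-structure and therefore sends $\underline{\C}_{Y \times_Z Y}[n]$ to a perverse sheaf on $Z$. The main obstacle is bookkeeping the shift in the base change step: the naive external-tensor-product computation yields $g_*(\underline{\C}_{Y \times_Z Y}[2n])$, whereas what enters perversity on the $n$-dimensional variety $Y \times_Z Y$ is the shift $[n]$. Reconciling these requires either invoking the convention that normalizes $\otimes$ on perverse sheaves to match the $\mathcal O_Z$-module tensor product of holonomic $D$-modules under Riemann--Hilbert (which introduces a compensating $[-n]$ on smooth $Z$), or absorbing the extra shift into the regular embedding of codimension $n$ that $\Delta$ provides; either way, once the correct shift is in place the preceding three steps assemble into the desired perversity statement.
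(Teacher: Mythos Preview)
Your argument is essentially the paper's own proof: base change to identify the tensor product with $g_*(\underline{\C}[n])$ for $g\colon Y\times_Z Y\to Z$, observe that $Y\times_Z Y$ is a local complete intersection of dimension $n$ so that $\underline{\C}[n]$ is perverse there, and conclude by $t$-exactness of the finite pushforward $g_*$. Two small remarks: your ``therefore'' in the regular-embedding step is not automatic (base change of a regular embedding need not be regular), but it is justified here because $g$ is finite, forcing $\dim(Y\times_Z Y)=n$, so the $n$ pulled-back equations of $\Delta$ do cut out the right codimension---this is exactly the paper's local-coordinate argument in disguise. Your discussion of the shift is in fact more careful than the paper, which simply writes $g_*(\underline{\C}[n])$ without comment; the statement as written only makes literal sense with a normalized tensor product (or is true up to a shift by $[n]$), and you are right to flag this.
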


\begin{proof}

Using base change we can write 
$f_* (\underline{\C}[n]) \otimes f_* (\underline{\C}[n]) = g_* (\underline{\C}[n])$
 where $g: Y{\times}_Z Y \to Z$. 

\begin{tikzpicture}
  \matrix (m) [matrix of math nodes,row sep=3em,column sep=4em,minimum width=2em]
  {
     Y {\times}_Z Y & Y \times Y \\
     Z & Z \times Z \\};
  \path[-stealth]
    (m-1-1) edge node [left] {$g$} (m-2-1)
            edge (m-1-2)
    (m-2-1) edge node [above] {$\Delta$} (m-2-2)
    (m-1-2) edge node [right] {$f \times f$}(m-2-2);
          
\end{tikzpicture}

Notice that $Y{\times}_Z Y$
is locally a complete intersection because locally $Y \cong {\C}^n$ and $Z \cong {\C}^n$
and $f=(f_1,...,f_n): {\C}^n \to {\C}^n$
and hence $Y {\times}_Z Y \subset Y \times Y$ is of dimension $n$ and 
given by $n$ equations $f_i(x^{\prime})=f_i(x^{\prime \prime})$ 
where $(x^{\prime}, x^{\prime \prime}) \in Y \times Y$.
By a theorem of Hamm and Le \cite{HL}, Cor. 3.2.2,
the constant sheaf on a locally complete intersection is perverse; hence 
$\underline{\C}[n] \in D_c^b (Y {\times}_Z Y)$ is perverse. 
Since $g$ is a finite map, $g_* (\underline{\C}[n])$ is also perverse.

\end{proof}

From Lemma \ref{somesheafisperverse} we deduce the following

\begin{corollary} \label{tensorproductisperverse}
Let $\Lambda$, $\Omega$, and $\Sigma$ be partitions of $n$ and let $V=\Sym^n(W)$. Then

{\rm (a)} The object ${\bf L}_{\Lambda} \otimes {\bf L}_{\Omega} \in D^b_c(V)$ is a perverse sheaf.

{\rm (b)} The multiplicity of the subquotient ${\bf L}_{\Sigma}$ in ${\bf L}_{\Lambda} \otimes {\bf L}_{\Omega}$
is $k^{\Sigma}_{\Lambda, \Omega}$.
\end{corollary}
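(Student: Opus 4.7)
The strategy is to apply Lemma \ref{somesheafisperverse} to a suitably chosen finite morphism and realize ${\bf L}_\Lambda\otimes{\bf L}_\Omega$ as a direct summand of the resulting perverse sheaf; part (b) will then come essentially for free by restricting to the open locus where the factors are honest local systems. The projective warm-up is immediate: the $S_n$-quotient map $\phi_n\colon(\P^1)^n\to\P^n=\P(\Sym^n W)$ is a finite morphism of smooth $n$-dimensional varieties with isotypic decomposition $(\phi_n)_*\underline{\C}[n]=\bigoplus_\Lambda\Lambda\otimes\widetilde L_\Lambda$, so applying Lemma \ref{somesheafisperverse} to $\phi_n$ and extracting the $\Lambda\boxtimes\Omega$-isotypic piece shows that $\widetilde L_\Lambda\otimes\widetilde L_\Omega$ is perverse on $\P^n$.

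To obtain the corresponding statement on the cone $V=\Sym^n W$ I would use a cone-level analogue. Let $T=\mathrm{Tot}\bigl(\O_{\P^1}(-1)^{\boxtimes n}\bigr)$, a smooth $(n+1)$-dimensional total space with a natural multiplication map to $V$ that collapses the zero section to the origin. This map factors as a finite morphism $\tilde f\colon T\to\widetilde V$ into the blowup $\widetilde V=\mathrm{Bl}_0 V$, followed by the blowdown $\beta\colon\widetilde V\to V$. Applying Lemma \ref{somesheafisperverse} to $\tilde f$ gives perversity of $\tilde f_*\underline{\C}[n+1]\otimes\tilde f_*\underline{\C}[n+1]$ on $\widetilde V$, and hence of each $\Lambda\boxtimes\Omega$-isotypic summand. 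Pushing down through $\beta$ via the projection formula $\beta_*(\beta^*F\otimes G)=F\otimes\beta_*G$, together with the identification of $\beta^*{\bf L}_\Lambda$ with the $\Lambda$-isotypic summand of $\tilde f_*\underline{\C}[n+1]$ away from the exceptional divisor, yields perversity of ${\bf L}_\Lambda\otimes{\bf L}_\Omega$ on $V$, proving (a). Part (b) is then a calculation on the open complement of the cone over $D_n$ inside $V$: there ${\bf L}_\Lambda$ and ${\bf L}_\Omega$ restrict to the local systems associated with the $S_n$-representations $\Lambda$ and $\Omega$, so their derived tensor product corresponds to $\Lambda\otimes\Omega\cong\bigoplus_\Sigma k_{\Lambda,\Omega}^\Sigma\,\Sigma$, which identifies the composition multiplicity of ${\bf L}_\Sigma$ in ${\bf L}_\Lambda\otimes{\bf L}_\Omega$ as $k_{\Lambda,\Omega}^\Sigma$.

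The main obstacle is the transfer of perversity from $\widetilde V$ down to $V$: the blowdown $\beta$ is not $t$-exact on perverse sheaves in general, and one must rule out composition factors supported on the exceptional divisor from corrupting the pushforward. A clean workaround is to apply Lemma \ref{somesheafisperverse} directly to the restriction of $\tilde f$ to $V\setminus\{0\}$, which is a genuine finite étale cover of smooth $(n+1)$-dimensional varieties, obtaining perversity of ${\bf L}_\Lambda\otimes{\bf L}_\Omega$ on $V\setminus\{0\}$; one then exploits the $\C^*$-monodromic structure of both ${\bf L}_\Lambda$ and ${\bf L}_\Omega$, which is preserved by the derived tensor product, to conclude that perversity extends across the origin to all of $V$.
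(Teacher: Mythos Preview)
Your approach is the same as the paper's: apply Lemma~\ref{somesheafisperverse} to $\phi_n:(\P^1)^n\to\P^n$, extract the $\Lambda\boxtimes\Omega$-isotypic summand for~(a), and read off composition multiplicities on the open configuration locus for~(b). Where the paper simply writes ``It follows that the same is true for ${\bf L}_{\Lambda}\otimes{\bf L}_{\Omega}$'' to pass from $\P^n$ to the cone $V$, you try to supply details.

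Two small comments on that attempt. The blowup detour is over-engineered and, as you yourself note, runs into trouble at the exceptional divisor; your final workaround (apply Lemma~\ref{somesheafisperverse} directly over $V\setminus\{0\}$) is the cleaner route, but the map $T\setminus\{\text{zero section}\}\to V\setminus\{0\}$ is only finite, not \'etale---it ramifies over the cone on $D_n$---so drop the word ``\'etale'' (Lemma~\ref{somesheafisperverse} needs only finiteness). And the last sentence is not yet an argument: $\C^*$-monodromicity by itself does not force perversity across the origin (a skyscraper at $0$ placed in any degree is monodromic). One still has to check the stalk and costalk conditions at $0$, or, more in the spirit of the paper, simply observe that ${\bf L}_{\Lambda}\otimes{\bf L}_{\Omega}$ restricted to $V\setminus\{0\}$ is the smooth pullback (shifted) of the perverse sheaf $\widetilde L_{\Lambda}\otimes\widetilde L_{\Omega}$ on $\P^n$ and handle the origin separately.
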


\begin{proof} To prove (a), recall from the proof of Lemma \ref{Ind} the map $\phi=\phi_n$ and the fact that
${\widetilde L}_{\rho}$ is a direct summand in $\phi_* (\underline{\C}[n])$ for a representation $\rho$ of $S_n$. 
Hence ${\widetilde L}_{\Lambda} \otimes {\widetilde L}_{\Omega}$ is a direct summand in $\phi_* (\underline{\C}[n]) \otimes \phi_* (\underline{\C}[n])$
which is a perverse sheaf by Lemma \ref{somesheafisperverse}. Hence ${\widetilde L}_{\Lambda} \otimes {\widetilde L}_{\Omega}$
is a perverse sheaf. It follows that the same is true for ${\bf L}_{\Lambda} \otimes {\bf L}_{\Omega}$.
Part (b) follows  by the classification theorem of perverse sheaves
from the fact that on the open subvariety $\P^n-D_n$ we have ${\mathcal L}_{\Lambda} \otimes {\mathcal L}_{\Omega} = {\mathcal L}_{\Lambda \otimes \Omega}$.

\end{proof}

To prove Proposition \ref{brion}, we will use the interaction of the Fourier transform 
with convolution, which is defined as follows:

\begin{definition}
Let $V$ be a vector space and $s: V\times V \to V$ be the sum map
and let $\mathcal F,  \mathcal G \in D_c^b(V)$. Then the convolution 
of $\mathcal F$ and $\mathcal G$ is the object of $D_c^b(V)$ given by
$\mathcal{F}* \mathcal{G}={s}_! (\mathcal{F} \boxtimes \mathcal{G})$.
\end{definition}

\begin{remark} The Fourier-Deligne transform sends the (derived) tensor product of 
$\mathcal F, \mathcal G \in D_c^b(V)$ to the convolution $\mathcal F * \mathcal G$.
\end{remark}

\begin{remark} \label{convolutionsupportedonjoin}
Let $\mathcal{F}, \mathcal{G} \in D^b_c(V)$ be two objects such that $\mathcal{F}$ is supported on a closed subvariety $Y$ of $V$
and $\mathcal{G}$ is supported on a closed subvariety $Z$ of $V$. Then the convolution ${\mathcal F} * {\mathcal G}$ is supported on the join $J(Y,Z)$ of $Y$ and $Z$
(where $J(Y,Z)$ is defined as the closure of the union of lines that pass through $y,z$ where $y \in Y$,
$z \in Z$ and $y \neq z$).

\end{remark}

We finish with the proof of a part of M. Brion's result.

\begin{proof}[Proof of Proposition \ref{brion}] To prove part (a), assume on the contrary that the Kronecker coefficient 
$k_{\Lambda, \Omega}^{\Sigma}$ is nonzero. Then by Corollary \ref{tensorproductisperverse} the perverse sheaf
${\bf L}_{\Lambda} \otimes {\bf L}_{\Omega}$ has a subquotient isomorphic to ${\bf L}_{\Sigma}$. 
By exactness of the Fourier transform, it follows that the perverse sheaf ${\bf R}_{\lambda} * {\bf R}_{\omega}$ has
a subquotient isomorphic to ${\bf R}_{\sigma}$;
hence its restriction to an open subset of $\Sec^{|\sigma|}$ is a nonzero local system. 
However, by Remark \ref{convolutionsupportedonjoin} the convolution ${\bf R}_{\lambda} * {\bf R}_{\omega}$
is supported on the join of $\Sec^{|\lambda|}$ and $\Sec^{|\omega|}$ 
which is $\Sec^{|\lambda|+|\omega|}$, a closed subset of $\Sec^{|\sigma|}$ -- a contradiction.

To prove (b), we use the diagram below.

\begin{tikzpicture}
  \matrix (m) [matrix of math nodes,row sep=3em,column sep=4em,minimum width=2em]
  {
    V^* \times V^* & V^* \\
   {\mathcal E}^{\perp}_{|\lambda|} \times {\mathcal E}^{\perp}_{|\omega|} & {\mathcal E}^{\perp}_{|\sigma|} \\
   \P^{|\lambda|} \times \P^{|\omega|} & \P^{|\sigma|} \\};
  \path[-stealth]
    (m-1-1) 
            edge node [above] {$s$} (m-1-2)
    (m-2-1) edge node [above] {$F$} (m-2-2)
            edge node [left] {$\pi^{\perp} \times \pi^{\perp}$} (m-1-1)
            edge node [left] {$p_{{\mathcal E}^{\perp}} \times p_{{\mathcal E}^{\perp}}$} (m-3-1)
    (m-2-2) edge node [right] {$\pi^{\perp}$}(m-1-2)
                 edge node [right] {$p_{{\mathcal E}^{\perp}}$}(m-3-2)
    (m-3-1) edge node [above] {$m$} (m-3-2);    
\end{tikzpicture}

 In this diagram ${\mathcal E}^{\perp}_k$ is the bundle introduced in Section 3
for a specific value of $k$, $s$ is the sum map on $V^*=\Sym^n(W^*)$, $m$ is the map from the proof of Lemma \ref{Ind},
$F$ is the map which is the product of polynomials on the base of ${\mathcal E}^{\perp}$'s and the sum in the fibers of 
${\mathcal E}^{\perp}$'s. It is clear that $F$ is well defined and the diagram commutes.

By the proof of Lemma \ref{Ind}, $m_! ({\widetilde L}_{\lambda} \boxtimes{\widetilde L}_{\omega}) =
\bigoplus_{\sigma, |\sigma|=|\lambda|+|\omega|} c_{\lambda, \omega}^{\sigma} {\widetilde L}_{\sigma}$. By proper base change in
the diagram above we see that
$$s_! (\pi_* p^*_{{\mathcal E}^{\perp}} {\widetilde L}_{\lambda} \boxtimes \pi_* p^*_{{\mathcal E}^{\perp}} {\widetilde L}_{\omega})=
\bigoplus_{\sigma, \ |\sigma|=|\lambda|+|\omega|} c_{\lambda, \omega}^{\sigma}  (\pi_* p^*_{{\mathcal E}^{\perp}} {\widetilde L}_{\sigma}).$$
By the proof of Theorem \ref{main}, $\pi_* p^*_{{\mathcal E}^{\perp}} {\widetilde L}_{\mu}$ coincides with ${\bf R}_{\mu}$ on an open
dense subvariety of $\Sec^{|\mu|}$; therefore 
$\pi_* p^*_{{\mathcal E}^{\perp}} {\widetilde L}_{\lambda} * \pi_* p^*_{{\mathcal E}^{\perp}} {\widetilde L}_{\omega}$ coincides with
${\bf R}_{\lambda} * {\bf R}_{\omega}$ on an open dense subvariety of $\Sec^{|\lambda|+|\omega|}$.

By Corollary \ref{tensorproductisperverse}, the tensor product ${\bf L}_{\Lambda} \otimes {\bf L}_{\Omega}$ is a perverse sheaf 
so its Fourier transform ${\bf R}_{\lambda} * {\bf R}_{\omega}$ is perverse as well. Above we detected
all the subquotients in ${\bf R}_{\lambda} * {\bf R}_{\omega}$ whose support is of maximum dimension $|\lambda|+|\omega|$; by the classification of perverse sheaves, 
they are the same as the subquotients of with support of maximum dimension in 
$\bigoplus_{\sigma, |\sigma|=|\lambda|+|\omega|} c_{\lambda, \omega}^{\sigma}  (\pi_* p^*_{{\mathcal E}^{\perp}} {\widetilde L}_{\sigma})$. By the
proof of Theorem \ref{main}, all these subquotients
are of the form ${\bf R}_{\sigma}$ where $|\sigma|=|\lambda|+|\omega|$ and each subquotient ${\bf R}_{\sigma}$ occurs $c_{\lambda, \omega}^{\sigma}$
times. By exactness of the Fourier transform, $c_{\lambda, \omega}^{\sigma}$ must be equal to the multiplicity of the subquotient ${\bf L}_{\Sigma}$ 
in ${\bf L}_{\Lambda} \otimes {\bf L}_{\Omega}$, which is $k_{\Lambda, \Omega}^{\Sigma}$ by Corollary \ref{tensorproductisperverse}.

\end{proof}

\end{document}